\documentclass{article}

\usepackage{quiver}
\usepackage{amsmath, enumitem,amsthm,mathtools,amssymb,eufrak,thmtools}
\usepackage{xcolor, subcaption, qtree, tikz,tikz-cd}
\usetikzlibrary{arrows,trees,positioning}
\usepackage[utf8]{inputenc}
\usepackage[colorinlistoftodos]{todonotes}
\usepackage{comment}

\newtheorem{theorem}{Theorem}[section]
\newtheorem{corollary}{Corollary}[section]
\newtheorem{lemma}{Lemma}[section]
\newtheorem{proposition}{Proposition}[section]
\newtheorem*{theorem*}{Theorem}
\newtheorem*{corollary*}{Corrolary}
\newtheorem{conjecture}{Conjecture}[section]

\newtheorem{definition}{Definition}[section]

\newtheorem{example}{Example}[section]

\usepackage{hyperref}
\definecolor{darkgoldenrod}{rgb}{0.72, 0.53, 0.04}
\definecolor{vegasgold}{rgb}{0.77, 0.7, 0.35}
\definecolor{gold(metallic)}{rgb}{0.83, 0.69, 0.22}
\definecolor{sepia}{rgb}{0.44, 0.26, 0.08}
\definecolor{silver}{rgb}{0.75, 0.75, 0.75}
\hypersetup{
 colorlinks=true,
 linkcolor=gold(metallic),
 filecolor=sepia,      
 urlcolor=vegasgold,
 citecolor=darkgoldenrod,
    }

\usepackage[numbers]{natbib}
\bibliographystyle{alpha}
\newcommand{\N}{\mathbb{N}}

\newcommand{\Z}{\mathbb{Z}}

\newcommand{\Q}{\mathbb{Q}}
\newcommand{\F}{\mathbb{F}}

\newcommand{\ot}{\mathcal{O}}

\newcommand{\Selp}{\Sel_{p^\infty}}

\DeclareMathOperator{\Gal}{Gal}
\DeclareMathOperator{\Sel}{Sel}

\DeclareMathOperator{\ord}{ord}
\DeclareMathOperator{\rank}{rank}
\DeclareMathOperator{\Reg}{Reg}
\DeclareMathOperator{\Coker}{Coker}
\DeclareMathOperator{\Tam}{Tam}

 % Image
 % Image
 % Cokernel
 % Kernel
 % spectrum
 % trace
 % projection
 % extension
 % domain
 % range
 % morphisms
 % endomorphism

 % Ext functor
 % Tor functor

\DeclareFontFamily{U}{wncy}{}
\DeclareFontShape{U}{wncy}{m}{n}{<->wncyr10}{}
\DeclareSymbolFont{mcy}{U}{wncy}{m}{n}
\DeclareMathSymbol{\Sh}{\mathord}{mcy}{"58}

\title{On the cyclotomic Iwasawa invariants of Elliptic Curves of rank one}
\date{}
\author{Foivos Chnaras}

\begin{document}

\maketitle 

\begin{abstract}
For elliptic curves of rank 1 over $\Q$, we provide a numerical criterion that determines whether the Iwasawa invariants at a prime $p$ of good reduction attain their minimum possible value. 
\end{abstract}

\section{Introduction}
The Iwasawa theory of elliptic curves provides deep insights on their arithmetic. For an elliptic curve $E$ over $\Q$, the study of Iwasawa theory for primes of good ordinary reduction was initiated by Mazur \cite{mazur1972rational}) and Greenberg \cite{greenberg1999iwasawa}. The supersingular case was first studied by Perrin Riou \cite{perrin1984arithmetique}. Pollack introduced the signed p-adic L-functions on the analytic side (\cite{pollack2001p}) and Kobayashi introduced the signed Selmer groups on the algebraic side (\cite{kobayashi2003iwasawa}). 
The goal of our study is to explore properties of the Iwasawa invariants in both the orginary and supersingular case. 

In particular, we develop an explicit numerical criterion that determines whether for an elliptic curve $E$ over $\Q$ of rank 1 and a prime $p$ of good reduction, the Iwasawa invariants attain their minimum possible value.

\subsection{The good ordinary case}

We start with an elliptic curve $E$ over $\Q$. Assume that $E$ has good, ordinary reduction at an odd prime $p$. Let $\lambda_p(E), \mu_p(E)$ be the Iwasawa invariants associated with the Pontryagin dual of $\Sel_{p^\infty}(E/\Q_\infty)$, the $p^\infty$-Selmer group of $E$ over the cyclotomic $\Z_p$- extension $\Q_\infty/\Q$. 

It is expected for density one of the good ordinary primes that $\mu_p(E)=0$ and $\lambda_p(E)=\rank E(\Q)$. When $E$ has rank $0$ over $\Q$, then this is known by a theorem of Greenberg ({\cite[Theorem~4.1]{greenberg1999iwasawa}}):

\begin{theorem*}[Greenberg] 
Assume $E$ is an elliptic curve over a number field $K$ with good ordinary reduction at all the primes above $p$. Assume also that $\Selp(E/K)$ is finite. Then 
$$f_E(0) \sim  \frac{ \prod\limits_{v \text{ bad}}c_v \cdot \prod\limits_{v|p}N_v^2 \cdot \# \Sel(E/K)}{E(K)_{tors}^2}$$
where $N_v:=\# \tilde E(\mathbb F_v)$ the number of points of the elliptic curve $\mod v$ and where $c_v$ are the Tamagawa numbers. Finally, $f_E(T)$ is the characteristic polynomial of the Pontryagin dual of $\Selp (E/K_\infty)$.
\end{theorem*}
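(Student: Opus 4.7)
The plan is to use Mazur's control theorem to reduce the statement to computing $\#\Selp(E/K_\infty)^\Gamma$, where $\Gamma := \Gal(K_\infty/K)$, and then to carry out this computation via an Euler characteristic analysis of global and local cohomology.

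The first step is to exploit the finiteness hypothesis on $\Selp(E/K)$. Writing $X := \Selp(E/K_\infty)^\vee$ and $\Lambda := \Z_p[[\Gamma]]$, the finiteness of $\Selp(E/K)$ (combined with Mazur's control theorem, used qualitatively at this stage) forces $X$ to be $\Lambda$-torsion with $f_E(0) \neq 0$. A standard structure-theoretic computation shows that for a finitely generated torsion $\Lambda$-module $X$ with $f_E(0) \neq 0$ and with no nontrivial finite $\Lambda$-submodule---the latter holding in our good ordinary setting by classical results---we have $X^\Gamma = 0$ and $\#X_\Gamma \sim f_E(0)$ up to a $p$-adic unit. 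Pontryagin duality then identifies $\#X_\Gamma$ with $\#\Selp(E/K_\infty)^\Gamma$, reducing the theorem to computing this order up to $p$-units.

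The second step is to set up Mazur's control diagram. Fix a finite set $S$ of primes of $K$ containing all bad primes, the primes above $p$, and the archimedean places. The usual Selmer-defining diagram
\begin{equation*}
\begin{tikzcd}[column sep=small]
0 \arrow[r] & \Selp(E/K) \arrow[r] \arrow[d, "s"] & H^1(K_S/K,\, E[p^\infty]) \arrow[r] \arrow[d, "h"] & \bigoplus_{v \in S} \mathcal{J}_v(K) \arrow[d, "g"] \\
0 \arrow[r] & \Selp(E/K_\infty)^\Gamma \arrow[r] & H^1(K_S/K_\infty,\, E[p^\infty])^\Gamma \arrow[r] & \Bigl(\bigoplus_{v \in S} \mathcal{J}_v(K_\infty)\Bigr)^\Gamma
\end{tikzcd}
\end{equation*}
with $\mathcal{J}_v$ denoting the local cohomology quotient entering the Selmer condition, admits a snake-lemma chase. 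Inflation-restriction identifies $\ker h$ with $H^1(\Gamma, E(K_\infty)[p^\infty])$, and since $\Gamma \cong \Z_p$ has cohomological dimension one, $\coker h = 0$. The snake lemma then produces an exact sequence linking the orders $\#\Selp(E/K_\infty)^\Gamma$, $\#\Selp(E/K)$, $\#\ker h$, and $\prod_v \#\ker g_v$.

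The third step is the explicit local computation of each $\ker g_v$. For $v \nmid p$ of bad reduction, a Néron-model analysis identifies $\ker g_v$ with the $p$-part of the component group, yielding the Tamagawa number $c_v$ (up to a $p$-unit). For $v \mid p$ of good ordinary reduction, the connected-étale sequence
$$0 \to \hat E[p^\infty] \to E[p^\infty] \to \tilde E[p^\infty] \to 0$$
drives the computation: using local Tate duality on the formal-group piece and unramifiedness on the étale piece, a Greenberg-style analysis yields $\#\ker g_v \sim N_v^2$ up to a $p$-unit. For good primes $v \nmid p$, $\ker g_v = 0$. Multiplying together and carefully tracking the torsion factors---one copy of $\#E(K)[p^\infty]$ from $\ker h$ in the control theorem, a second copy arising when relating $\Gamma$-invariants to their Pontryagin duals---yields the stated formula with $\#E(K)_{tors}^2$ in the denominator.

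The main obstacle is the local calculation at primes $v \mid p$: one must track both the formal and étale components through the local Selmer condition to obtain $N_v^2$ rather than merely $N_v$, and it is here that the good ordinary hypothesis is essential. A secondary subtlety is verifying that $X$ has no nontrivial finite $\Lambda$-submodule, so that $X^\Gamma = 0$ and the clean identification $f_E(0) \sim \#\Selp(E/K_\infty)^\Gamma$ holds.
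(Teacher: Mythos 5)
The paper does not prove this statement: it is imported verbatim from Greenberg (\cite{greenberg1999iwasawa}, Theorem 4.1) and used as a black box, so there is no in-paper proof to compare against. Your plan is, in substance, a faithful outline of Greenberg's own argument: control theorem plus snake lemma, local kernel computations giving $c_v$ away from $p$ and $N_v^2$ at $v\mid p$, and the structure-theoretic step identifying $f_E(0)$ with $\#X_\Gamma$ once $X^\Gamma=0$. That route is correct, and the two nontrivial inputs you cite as "classical" (no nonzero finite $\Lambda$-submodule of $X$; the reduction of $f_E(0)$ to $\#\Selp(E/K_\infty)^\Gamma$) are indeed theorems of Greenberg available under exactly these hypotheses.

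Two points in the plan deserve sharpening. First, the snake-lemma chase only yields the order of $\coker(s)$ if you control the cokernels of the horizontal global-to-local maps in both rows; surjectivity of the bottom-row map $H^1(K_S/K_\infty, E[p^\infty])\to\bigoplus_v \mathcal{J}_v(K_\infty)$ is not formal --- it is Greenberg's Lemma 4.6 and requires the Cassels--Poitou--Tate sequence together with the cotorsion property of $\Selp(E/K_\infty)$. Second, and relatedly, your accounting of the two torsion factors is off: one copy of $\#E(K)[p^\infty]$ does come from $\ker h = H^1(\Gamma, E(K_\infty)[p^\infty])$, but the other comes from the cokernel of the top-row global-to-local map over $K$ (Cassels' computation of that cokernel in the rank-zero situation), not from "relating $\Gamma$-invariants to their Pontryagin duals" --- the duality step $\#X_\Gamma = \#\Selp(E/K_\infty)^\Gamma$ is an exact equality and contributes no torsion factor. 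Neither issue changes the final formula, but both must be addressed for the chase to close.
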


The number of primes that divide the Tamagawa numbers, the Selmer group and $E(K)_{tors}$ is finite and a Cebotarev type argument shows that the number of anomalous primes, i.e. the primes that divide $N_p$, has density 0. For more details, see \cite{greenberg1999iwasawa}

For elliptic curves of rank $1$, works of Perrin-Riou and Schneider (see \cite{perrin1984arithmetique}, \cite{schneider1983iwasawa}) produce the generalization of the above theorem: 

\begin{theorem}\label{Theorem 2.4}
Let $E/\Q$ be an elliptic curve with good ordinary reduction at $p>2$. Then $\ord_{T=0}f_E(T) \geq \rank E(\Q)$.Furthermore, if:
\begin{enumerate}
    \item $\Reg_p(E/\Q)\neq 0$
    \item $\Sh(E/\Q)[p^\infty]$ is finite
    \item The kernel and cokernel of the map $\Phi_M:M^\Gamma\to M_\Gamma$ are finite, where $M=\Selp(E/\Q_\infty)$.
    \end{enumerate}
    then: 
    \begin{enumerate}
        \item $\ord_{T=0}f_E(T)=\rank E(\Q)$
        \item Write $f_E(T)=T^rg(T)$, where $r:=\rank E(\Q)$. Then  $$g_E(0) \sim   \frac{\text{Reg}_p(E/\Q)}{p^r}\cdot \frac{ \prod\limits_{\ell \text{ bad}}c_\ell \cdot N_p^2 \cdot \# \Sh(E/\Q)}{ (\#E(\Q)_{tors})^2} $$ up to a $p-$adic unit.
    \end{enumerate}

\end{theorem}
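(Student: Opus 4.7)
The strategy combines Mazur's control theorem, which accounts for the order of vanishing and for most of the arithmetic factors, with Perrin-Riou's interpretation of the control map in terms of the $p$-adic height pairing, which accounts for the regulator factor. First, Mazur's control theorem produces an exact sequence
\[
0 \to \Ker(s) \to \Selp(E/\Q) \xrightarrow{s} \Selp(E/\Q_\infty)^\Gamma \to \coker(s) \to 0,
\]
with kernel and cokernel built from restriction maps at the bad primes and at $p$. A standard $\Lambda$-module computation then gives
\[
\ord_{T=0}f_E(T) \;=\; \operatorname{corank}_{\Z_p}\Selp(E/\Q_\infty)^\Gamma \;\geq\; \operatorname{corank}_{\Z_p}\Selp(E/\Q) \;\geq\; r,
\]
with equality on the right when $\Sh(E/\Q)[p^\infty]$ is finite. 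Hypothesis (3) forces the two coranks to agree, so under (2) and (3) we obtain $\ord_{T=0}f_E(T)=r$, which is claim (1).

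To extract $g(0)$, I would invoke the general Iwasawa-theoretic identity that for a finitely generated torsion $\Lambda$-module $M$ with characteristic polynomial $f=T^r g$, and with $\Phi_M\colon M^\Gamma\to M_\Gamma$ having finite kernel and cokernel (this is exactly hypothesis (3)), one has
\[
g(0) \;\sim\; \frac{\#\coker\Phi_M}{\#\Ker\Phi_M}
\]
up to a $p$-adic unit. The problem is thereby reduced to computing the sizes of these two finite $p$-groups.

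These sizes are computed by a snake-lemma chase through Mazur's control sequence combined with local Poitou--Tate duality applied prime-by-prime. The bad-prime cokernels contribute $\prod_{\ell}c_\ell$; the local Kummer map at $p$ contributes $N_p^2$, coming from $\#\tilde E(\F_p)$ appearing twice via local duality; the finite Sha (hypothesis (2)) contributes $\#\Sh(E/\Q)[p^\infty]$; and global torsion on both sides contributes $(\#E(\Q)_{\mathrm{tors}})^2$ in the denominator. Finally, the transcendental factor $\Reg_p(E/\Q)/p^r$ arises from Perrin-Riou's identification of $\Phi_M$ restricted to the free rank-$r$ part of $M^\Gamma$ with the cyclotomic $p$-adic height pairing on $E(\Q)/$torsion, whose non-degeneracy is exactly hypothesis (1).

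The main obstacle is this last identification -- matching $\det\Phi_M$ on the free part with the $p$-adic regulator. This is Perrin-Riou's contribution and rests on her intrinsic construction of the cyclotomic $p$-adic height pairing as the first-order obstruction to the control map being an isomorphism on the $\Z_p$-free part of $\Selp(E/\Q_\infty)^\Gamma$. Everything else -- tracking sizes through the snake lemma, applying Tate local duality at each prime, and using that the $p$-size of a finite $\Z_p$-module is computed by its characteristic invariants -- is bookkeeping by comparison.
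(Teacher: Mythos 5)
The paper does not actually prove this theorem: it is quoted as a known result of Schneider and Perrin-Riou, with the argument deferred to the cited references, so there is no in-paper proof to compare against. Your outline reconstructs the standard strategy of those references (control theorem, truncated Euler characteristic, local duality for the arithmetic factors, Perrin-Riou's height-pairing interpretation of $\Phi_M$ for the regulator), which is the right architecture. Two links in the chain are wrong as written, however.

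First, the displayed chain has the equality and the inequality in the wrong places. A $\Lambda$-module computation gives only $\ord_{T=0}f_E(T)\geq \operatorname{corank}_{\Z_p}\Selp(E/\Q_\infty)^{\Gamma}$, and this can be strict (e.g.\ for a dual pseudo-isomorphic to $\Lambda/(T^2)$); meanwhile Mazur's control theorem already makes the middle comparison an equality of coranks unconditionally. So hypothesis (3) is not needed to make ``the two coranks agree''; it is needed precisely to upgrade the first inequality to an equality, since finiteness of $\Ker\Phi_M$ and $\coker\Phi_M$ is what excludes $T^2\mid f_E(T)$ on the relevant part. As written, your argument for claim (1) asserts the one step that is false in general and spends the key hypothesis where it does no work. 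Second, the truncated Euler characteristic identity is inverted for your choice of $M$: with $M=\Selp(E/\Q_\infty)$ the discrete module, one has $g(0)\sim \#\Ker\Phi_M/\#\coker\Phi_M$ (the quantity the paper calls $\chi_T(\Gamma,M)$, which is an integer); the ratio you wrote is the one for the induced map on the Pontryagin dual. Neither slip is fatal to the overall plan, but both must be corrected before the local-duality bookkeeping you defer could yield the stated formula rather than its reciprocal.
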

A few remarks are in order: 
\begin{itemize}
    \item The assumptions (1)-(2) are conjecturally always true. The first one in particular is proven for CM elliptic curves of rank 1, by the work of Bertrand (see \cite{bertrand1980valeurs}).
    \item The third hypothesis is true if $T^2 \nmid f_E(T)$, i.e. if the rank is less or equal to 1 (see \cite[Lemma~2.11]{zerbes2009generalised}. In that case, the quotient $\chi_T(\Gamma,M):=\frac{\# \ker \Phi_M}{\# \Coker \Phi_M}$ is called the \textit{truncated Euler characteristic}. When defined, it is always an integer.
    \item In general, $\lambda_p(E)\geq \rank E(K)$ by an application of Mazur's Control Theorem. $\mu_p(E)$ is expected to be $0$ any time $E[p]$ is irreducible by Greenberg's Conjecture. It is known that $\mu_p=0$ for almost all primes (see \cite{gajek2024bound} and \cite{chakravarthy2024iwasawa} )
    
    \item When $E/\Q$ has rank $1$, we have $\mu_p(E)=0$ and $\lambda_p(E)=1 \iff g_E(0) \in \Z_p^\times.$ 
\end{itemize}

Therefore, in the rank $1$ case, the only addition is the appearance of the $p-$adic regulator.  
Our aim is to simply translate the property of $g_E(0)$ being a unit to a numerical criterion. Before we state our main results, let us first establish some notations: 

For an elliptic curve $E/\Q$ with a fixed Weierstrass model, we define:
\begin{definition}
\begin{enumerate}[]
    \item $E^0(\Q_p)=\{P \in E(\Q_p): P\text{ has non-singular reduction at } p\}$
    \item $E^0(\Q)=\{P\in E(\Q): P \text{ has non-singular reduction at every prime } p \}$
    \item $ E^k(\Q_p)=$ the $k$-th filtration on the formal group
    \item $N_p(E)=\# \tilde E_{ns}(\F_p)$ 
    \item $C_E(P)=\min\{n: nP \in E^0(\Q)\}$
    \item $C_E=\min\{n: nP\in E^0(\Q) \text{ for all } P\in E(\Q)\}$
    \item $c_p$= the $p$-th Tamagawa number
    \item $\mathrm{Tam}(E)=$ the least common multiple of the Tamagawa numbers
    \item If $P=(\frac{a}{d^2},\frac{b}{d^3}) \in E(\Q)$, then denote $a(P)=a, \quad b(P)=b, \quad d(P)=d$ and  $$nP=\left (\frac{a_n}{d_n^2},\frac{b_n}{d_n^3} \right )$$
    
\end{enumerate}
\end{definition}Our first main result is as follows.

\begin{theorem}\label{ordinary theorem}
Let $E$ be an elliptic curve of rank 1 over $\Q$, with good ordinary reduction at $p\geq 5$. \\
Let $P\in E(\Q)$ be a generator of the free part of the Mordell Weil group. \\
Let $$Q=N_p(E)\cdot C_E \cdot P \in E^1(\Q_p)\cap E^0(\Q)$$ where we can replace $C_E$ with $Tam(E)$ if the curve is given in the global minimal model.\\
Assume $p$ does not divide $C_E, \# E(\Q)_{tors}$ and $\#\Sh(E/\Q)$. \\Then  
$$\left [\mu_p(E)=0 \text{ and } \lambda_p(E)=1 \right ]\iff  a(Q)^{p-1}\not\equiv 1 \mod p^2 $$

\end{theorem}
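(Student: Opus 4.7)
The plan is to translate the Iwasawa-theoretic condition into a valuation condition on the $p$-adic canonical height of $Q$, and then to use an explicit formula for that height on points of the formal group.

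First, by the final bullet of the remarks after Theorem~\ref{Theorem 2.4}, the conjunction $\mu_p(E)=0$ and $\lambda_p(E)=1$ is equivalent to $g_E(0)\in\Z_p^\times$. Applying the formula of Theorem~\ref{Theorem 2.4} with $r=1$,
\[
g_E(0)\sim \frac{\Reg_p(E/\Q)}{p}\cdot\frac{\prod_\ell c_\ell\cdot N_p^2\cdot\#\Sh(E/\Q)}{(\#E(\Q)_{tors})^2},
\]
and using the hypotheses $p\nmid C_E$, $p\nmid\#E(\Q)_{tors}$, $p\nmid\#\Sh(E/\Q)$ (together with $p\nmid\prod_\ell c_\ell$ in the minimal model, which is where the option to replace $C_E$ by $\Tam(E)$ is useful), every factor other than $\Reg_p/p$ and $N_p^2$ is a $p$-adic unit. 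Hence the condition becomes $\nu_p(\Reg_p(E/\Q))+2\nu_p(N_p)=1$. Since the rank is one, $\Reg_p(E/\Q)=\hat h_p(P)$, and the quadraticity of $\hat h_p$ applied to $Q=N_p\cdot C_E\cdot P$ gives $\hat h_p(Q)=(N_p\,C_E)^2\,\hat h_p(P)$; using $p\nmid C_E$, this reformulates the condition as $\nu_p(\hat h_p(Q))=1$.

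Second, I would invoke the explicit description of the cyclotomic $p$-adic height for a point in $E^1(\Q_p)\cap E^0(\Q)$. Because $Q\in E^0(\Q_\ell)$ for every $\ell$, the local height contributions away from $p$ vanish; the local contribution at $p$ can be computed on the formal group via the parameter $t=-x/y$. Unwinding the Perrin-Riou/Schneider construction using $Q=(a(Q)/d(Q)^2,\,b(Q)/d(Q)^3)$ with $p\mid d(Q)$ and $p\nmid a(Q)$, and the Weierstrass expansion $x(t)\,t^2\in 1+t\Z_p[[t]]$, one arrives at a formula of the shape $\hat h_p(Q)\sim\log_p(a(Q))$ up to a $p$-adic unit, where $\log_p$ is the Iwasawa logarithm on $\Z_p^\times$.

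Third, to extract the numerical criterion: for $a\in\Z_p^\times$ write $a^{p-1}=1+p\beta$ with $\beta\in\Z_p$. The power-series expansion gives $\log_p(a)=\frac{1}{p-1}\log_p(a^{p-1})\equiv\frac{p\beta}{p-1}\pmod{p^2\Z_p}$, whence $\nu_p(\log_p(a))=1+\nu_p(\beta)$. Therefore $\nu_p(\log_p(a(Q)))=1$ if and only if $\beta\in\Z_p^\times$, which is precisely the statement $a(Q)^{p-1}\not\equiv 1\pmod{p^2}$. Combined with Steps~1--2, this yields the equivalence claimed in the theorem.

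The hard part will be Step~2, establishing the explicit identity $\hat h_p(Q)\sim\log_p(a(Q))$: one must trace the Perrin-Riou/Schneider definition of the cyclotomic $p$-adic height through its local decomposition, isolate the local term at $p$ as a sigma-function value, and match it against the numerator $a(Q)$ via the Weierstrass parametrization of the formal group, with enough precision to control the unit part rather than just the valuation. A secondary subtlety, already alluded to in Step~1, is justifying that $\prod_\ell c_\ell$ contributes a $p$-adic unit solely from the stated hypothesis $p\nmid C_E$; this is where the minimal-model reformulation with $\Tam(E)$ in place of $C_E$ becomes essential.
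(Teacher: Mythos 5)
Your Steps 1 and 3 coincide with the paper's argument: the paper likewise uses the Perrin-Riou--Schneider formula to reduce the statement to $v_p(h_p(Q))=1$ (absorbing the $N_p^2$ into the quadraticity of the height and using $p\nmid C_E$), and it disposes of the final equivalence with exactly your observation that for $v\in\Z_p^\times$ one has $v_p(\log_p v)>1\iff v^{p-1}\equiv 1\bmod p^2$ (quoted from Harvey). So the skeleton is the right one and matches the paper.

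The problem is that your Step 2, which you yourself flag as ``the hard part,'' is precisely where the entire content of the theorem lives, and your description of it is not just incomplete but slightly off target. The paper writes $h_p(Q)=\log_p z$ with $z=\sigma_p(Q)/d(Q)$ and expands $z=-\tfrac{a}{b}+\tfrac12 a_1\tfrac{a^2}{b^2}d+O(d^3)$ on the formal group via $t=-\tfrac{a}{b}d$. Note that the leading term is $-a/b$, not $a$: the formal-group parametrization alone only yields a criterion in terms of the Fermat quotient $q_p(a/b)=q_p(a)-q_p(b)$ (plus an $a_1$-correction when $p\,\|\,d$). The essential extra input --- absent from your sketch --- is the Weierstrass equation of $Q$ cleared of denominators and reduced mod $p^2$, which gives $q_p\bigl(b^2+a_1abd\bigr)=q_p(a^3)$; this relation is what eliminates $q_p(b)$ and simultaneously absorbs the $\tfrac12 a_1\tfrac{a}{b}\tfrac{d}{p}$ term, yielding $q_p(z)=\pm\tfrac12 q_p(a)$ in each of the paper's cases ($a_1=0$; $a_1\neq 0$ with $Q\in E^2(\Q_p)$; $a_1\neq 0$ with $Q\in E^1(\Q_p)\setminus E^2(\Q_p)$). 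Without this step the criterion would involve $b(Q)$ as well, so the theorem as stated would not follow. Relatedly, your claim that $h_p(Q)\sim\log_p(a(Q))$ ``up to a $p$-adic unit'' is stronger than what is true or needed: what the computation gives is equality of Fermat quotients up to the factor $\pm\tfrac12$, which suffices for the valuation-one equivalence but is not a unit ratio of the two logarithms when both exceed valuation one. Finally, your worry about $\prod_\ell c_\ell$ being a $p$-adic unit under the hypothesis $p\nmid C_E$ is legitimate --- the paper glosses over this point as well rather than resolving it --- but your suggestion that the minimal-model/$\Tam(E)$ option fixes it is not a proof, since the stated hypothesis remains $p\nmid C_E$, which does not by itself control $\Tam(E)$.
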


This is comparable to Gold's criterion for the cyclotomic $\lambda$ invariant of imaginary quadratic fields (\cite{gold1974nontriviality}):

\begin{theorem*}[Gold's Criterion]
    Let $K$ be an imaginary quadratic field, and $p>3$ be a prime that splits in $K$ as $p\ot_K=\mathfrak p \bar{ \mathfrak p}$.\\
    Assume $p$ does not divide $h_K$ (the class number of $K$). Write $\mathfrak p^{h_K}=(a)$. Then $$\mu_p(K)+\lambda_p(K)=1 \iff a^{p-1}\not\equiv 1 \mod \bar{\mathfrak p}^2$$

\end{theorem*}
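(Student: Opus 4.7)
My plan is to interpret the criterion through class field theory, reducing the global Iwasawa-theoretic statement to a local condition at $\bar{\mathfrak{p}}$ expressed by a Fermat quotient. Let $K_\infty/K$ denote the cyclotomic $\Z_p$-extension with Galois group $\Gamma \cong \Z_p$, set $\Lambda = \Z_p[[T]]$, and let $X = \varprojlim A_n$, where $A_n$ is the $p$-part of the ideal class group of the $n$-th layer $K_n$. Since $K/\Q$ is abelian, Ferrero--Washington gives $\mu_p(K) = 0$, so the criterion reduces to showing $\lambda_p(K) = 1 \iff a^{p-1} \not\equiv 1 \pmod{\bar{\mathfrak{p}}^2}$; equivalently, that $X$ is pseudo-isomorphic to $\Lambda/(T - \alpha)$ for some $\alpha \in p\Z_p$.

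The core of the argument is to rephrase this cyclicity as a local condition via the fundamental Iwasawa-theoretic exact sequence
$$0 \to \ol{\mathcal{E}}_\infty \to \mathcal{U}_\infty \to \mathcal{X}'_\infty \to X \to 0,$$
in which $\mathcal{U}_\infty = \varprojlim_n (U^1_{\mathfrak{p}_n} \oplus U^1_{\bar{\mathfrak{p}}_n})$ is the inverse limit of principal local units at the primes above $p$, $\ol{\mathcal{E}}_\infty$ is the closure of the diagonal image of global $p$-units, and $\mathcal{X}'_\infty$ is the Galois group over $K_\infty$ of the maximal abelian $p$-extension unramified outside $p$. The $\Lambda$-module structures of $\mathcal{U}_\infty$ and $\mathcal{X}'_\infty$ are classical (Iwasawa), so under the hypothesis $p\nmid h_K$ the characteristic ideal of $X$ is controlled by the image of $\ol{\mathcal{E}}_\infty$. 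I would show that $\lambda_p = 1$ is equivalent to the image of the global $p$-unit $a$ (a generator of $\mathfrak{p}^{h_K}$) being a generator of $U^1_{\bar{\mathfrak{p}}}/U^2_{\bar{\mathfrak{p}}} \cong \F_p$ under the local embedding at $\bar{\mathfrak{p}}$.

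Finally, one identifies this with the Fermat quotient: since the residue field at $\bar{\mathfrak{p}}$ is $\F_p$, Fermat's little theorem gives $a^{p-1} \equiv 1 \pmod{\bar{\mathfrak{p}}}$ automatically, and the refined congruence $a^{p-1} \not\equiv 1 \pmod{\bar{\mathfrak{p}}^2}$ says exactly that the image of $a^{p-1}$ (and hence of $a$ modulo $p$-th powers in $\Z_p^\times$) generates the cyclic group $U^1_{\bar{\mathfrak{p}}}/U^2_{\bar{\mathfrak{p}}}$ of order $p$. This completes the equivalence.

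The main obstacle is the middle step, namely isolating a one-dimensional local condition at $\bar{\mathfrak{p}}$ from the four-term exact sequence. The asymmetry of the criterion --- the generator $a$ arises from $\mathfrak{p}^{h_K}$ while the congruence is tested modulo $\bar{\mathfrak{p}}^2$ --- reflects that, decomposing $\mathcal{U}_\infty \cong \mathcal{U}_{\mathfrak{p},\infty} \oplus \mathcal{U}_{\bar{\mathfrak{p}},\infty}$ under complex conjugation, only the $\bar{\mathfrak{p}}$-component actually detects the characteristic ideal of $X$ through the Coleman/Coates--Wiles logarithm. Verifying that the resulting invariant of $a$ agrees precisely with the Fermat residue $(a^{p-1}-1)/p \pmod{\bar{\mathfrak{p}}}$ is the key technical point, and matching the normalizations between the $p$-adic logarithm, the reciprocity map, and the module-theoretic characteristic ideal is where the bookkeeping becomes delicate.
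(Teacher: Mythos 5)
The first thing to note: the paper never proves this statement. Gold's Criterion is quoted from \cite{gold1974nontriviality} purely as a classical analogue of Theorem \ref{ordinary theorem}, so your proposal can only be compared with Gold's own argument and the standard treatments, not with a proof in this paper. Measured that way, your overall route --- kill $\mu$ by Ferrero--Washington, read off the characteristic ideal of $X$ from the semi-local units modulo the closure of the ($p$-)units, then identify the condition on $a$ with a Fermat quotient at $\bar{\mathfrak p}$ --- is a recognizable and workable modern strategy, and your final step is correct: for $a\in\Z_p^\times$ at $\bar{\mathfrak p}$, the congruence $a^{p-1}\not\equiv 1 \bmod \bar{\mathfrak p}^2$ says exactly that $\langle a\rangle$ generates $U^1_{\bar{\mathfrak p}}/U^2_{\bar{\mathfrak p}}\cong\F_p$. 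Be aware, though, that the paper explicitly remarks that Ferrero--Washington is \emph{not} needed for Gold's theorem: Gold's original proof is much more elementary, resting on the local class field theory fact that a unit $u\in\Z_p^\times$ is a norm from the $n$-th layer of the cyclotomic $\Z_p$-extension of $\Q_p$ if and only if $u^{p-1}\equiv 1 \bmod p^{n+1}$, combined with tracking the ideal classes of the primes above $p$ up the layers $K_n$; your machinery is heavier than what the result requires.

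As a proof, however, your text has one concrete error and one genuine gap. The error: in your four-term sequence you take $\ol{\mathcal E}_\infty$ to be the closure of the global \emph{$p$-units} but let the cokernel be $X=\varprojlim A_n$. With $p$-units in the kernel the cokernel is $X'=\varprojlim A'_n$, where $A'_n$ is $A_n$ modulo the classes of the primes above $p$; with ordinary units (which for imaginary quadratic $K$ are finite, so that $\ol{\mathcal E}_\infty$ has $\Lambda$-rank $0$) the cokernel is $X$. This is not cosmetic bookkeeping: the difference between $X$ and $X'$ is generated precisely by the classes of $\mathfrak p_n$ and $\bar{\mathfrak p}_n$, and whether these contribute an extra growth ($\lambda\geq 2$) or not ($\lambda=1$) is the entire content of Gold's criterion, so conflating $X$ with $X'$ assumes away the theorem. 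The gap: your ``middle step'' --- that $\lambda_p=1$ is equivalent to the image of $a$ generating $U^1_{\bar{\mathfrak p}}/U^2_{\bar{\mathfrak p}}$, and that only the $\bar{\mathfrak p}$-component of $\mathcal U_\infty$ detects the characteristic ideal --- \emph{is} the theorem, and you state it rather than derive it (you acknowledge this yourself). You also silently need the unconditional fact that $\lambda_p(K)\geq 1$ when $p$ splits, so that the dichotomy really is $\lambda=1$ versus $\lambda\geq 2$; your sketch never establishes it. In short: correct destination, plausible road map, but the load-bearing step is missing and the exact sequence must be repaired before anything can be hung on it.
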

Note that for imaginary quadratic fields we already know $\mu_p(K)=0$ by the Ferrero-Washington theorem, although that fact is not needed for the proof of the theorem. Both ours and Gold's criteria can be made more explicit, relaxing the assumption about the non-divisibility by $p$. This will be made clear in the proof of the theorem, but for simplicity, we skip that formulation here.

\subsection{The supersingular case}
For supersingular primes, the situation is more involved.  Perrin-Riou constructed a p-adic $L$-series $L_p(E,T)$ with coefficients in $D_pE\otimes Q_p[[T]]$, where $D_pE$ is the Dieudonne model, which is a $\Q_p$-vector space of dimension 2.

There's also a construction of a $p-$adic regulator of the elliptic curve with values again in $D_pE$. Then the $p-$adic Birch and Swinnerton Dyer conjecture for the supersingular case, as posed by Perrin-Riou and Bernardi in \cite{bernardiperrinriou1993variante}, is as follows:

\begin{conjecture}[Bernardi--Perrin-Riou]\label{conjecture bsd ss}
    Suppose E is an elliptic curve over $\Q$ with good supersingular reduction at $p$. 
    \begin{enumerate}
        \item The order of vanishing of $L_p(E,T)$ at $T=0$ is equal to the rank $r$ of the elliptic curve over $\Q$.
        \item The leading term $L_p^*(E,0)$ satisfies 
        \begin{equation}
            (1-\phi)^{-2}L_p^*(E,0)=\frac{\prod_v c_v \cdot \Sh (E/Q)}{|E(\Q)_{tors}|^2} \cdot \frac{\Reg_p(E/\Q)}{p^r}
        \end{equation}
        where $\phi$ is the Frobenius as explained in Section \ref{Section Dieudonne}.
    \end{enumerate}
\end{conjecture}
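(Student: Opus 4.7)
Since Conjecture~\ref{conjecture bsd ss} is an open problem (even in rank $0$, in full generality), the most one can offer is a strategy mirroring the ordinary-case program of Schneider, Perrin-Riou, and Greenberg that culminated in Theorem~\ref{Theorem 2.4}. The plan is to split the statement into (i) the order of vanishing of $L_p(E,T)$ at $T=0$ and (ii) the identification of the leading term $L_p^*(E,0)$, then attack each half through the Iwasawa Main Conjecture on the algebraic side and Kato's Euler system on the analytic side.

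For part (i), I would first replace the Dieudonné-valued object $L_p(E,T)$ by Pollack's signed $p$-adic $L$-functions $L_p^\pm(E,T)$, and the classical Selmer group by Kobayashi's signed Selmer groups $\Sel^\pm_{p^\infty}(E/\Q_\infty)$. Kato's divisibility in the supersingular Main Conjecture would give $\mathrm{char}(\Sel^\pm)^\vee \mid L_p^\pm$; the reverse divisibility, known in substantial generality via the Skinner--Urban-type results extended to the supersingular setting, would pin down the characteristic ideal. A signed analog of Mazur's control theorem then translates the order of vanishing of $L_p^\pm$ at $T=0$ into information about the Bloch--Kato Selmer group over $\Q$, from which, assuming $\Sh(E/\Q)[p^\infty]$ finite and $\Reg_p(E/\Q)\neq 0$, one reads off $\ord_{T=0}L_p(E,T) = r$.

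For part (ii), I would follow Schneider's template: compute the truncated Euler characteristic $\chi_T(\Gamma, \Sel^\pm_{p^\infty}(E/\Q_\infty))$ in terms of the arithmetic invariants $\prod_v c_v$, $\#\Sh(E/\Q)$, $|E(\Q)_{tors}|$, $N_p^2$, and the $p$-adic regulator $\Reg_p(E/\Q)$, then transfer this data through the signed Main Conjecture to the leading terms of $L_p^\pm(E,T)$. The factor $(1-\phi)^{-2}$ in the conjecture arises when reassembling the Dieudonné-valued $L_p(E,T)$ from the pair $L_p^\pm(E,T)$ via Perrin-Riou's logarithm matrix; a careful bookkeeping of the Euler factors at $p$ in her interpolation formula should produce exactly the prefactor appearing on the left-hand side of the equality.

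The principal obstacle is that the supersingular Main Conjecture in the form needed (equality of characteristic ideals for both signs) is known only under substantial hypotheses — typically irreducibility of $E[p]$ and residual conditions reminiscent of Skinner--Urban — so any proof along these lines is unconditional only in special cases. A secondary but serious difficulty is controlling the $p$-adic regulator: one must know $\Reg_p(E/\Q)\neq 0$ to extract the leading term honestly, and outside the CM setting (where Bertrand's theorem applies) this non-vanishing is itself conjectural. Finally, the extra variable hidden in the Dieudonné module means that the ``leading term'' is a vector, not a scalar, and verifying that the $(1-\phi)^{-2}$-twisted leading term actually lands in $\Q_p$ (rather than merely in $D_pE$) is a compatibility statement that must be checked alongside the numerical equality.
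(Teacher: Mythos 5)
The paper does not prove this statement, and you should not expect it to: it is the Bernardi--Perrin-Riou $p$-adic Birch and Swinnerton-Dyer conjecture, imported from \cite{bernardiperrinriou1993variante} (it reappears essentially verbatim as Conjecture \ref{p-adic BSD supersingular}) and is used in the paper only as a \emph{hypothesis} of Theorem \ref{supersingular theorem}. So there is no proof in the paper to compare yours against, and you are right to say at the outset that the statement is open; your text is a research program, not a proof, and cannot be graded as one.

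That said, your sketch points in a sensible direction and partially anticipates what the paper actually does \emph{assuming} the conjecture. The reassembly of the Dieudonn\'e-valued $L_p(E,T)$ from the signed pair is carried out in the paper via Equation \ref{eq L_p supersingular}, $L_p(E,T)=(G^+(T),-pG^-(T))_{(\omega,\phi(\omega))}$, the first-order expansion of $L_p^\pm=G^\pm/\log_p^\pm$ in Lemma \ref{Lemma L_p-expansion}, and the explicit matrix computation of $(1-\phi)^2\Reg_p$ in the basis $(\omega,\phi(\omega))$ using Proposition \ref{prop Frob}; this is precisely the bookkeeping you gesture at for the prefactor $(1-\phi)^{-2}$, but run in the opposite direction (from the conjectured identity to the $\lambda^\pm$ criteria). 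Two cautions on your plan. First, the signed Main Conjecture as an \emph{equality} of characteristic ideals is known only for CM curves (\cite{pollackrubin2004main}) and as one divisibility in general (\cite{kobayashi2003iwasawa}); moreover the Main Conjecture by itself only determines $L_p^\pm$ up to a unit of $\Lambda$, so it can at best give part (2) of the conjecture up to a $p$-adic unit, whereas the conjecture as stated is an exact identity in $D_pE$ --- an Euler-system or explicit-reciprocity input beyond the Main Conjecture is needed to remove the ambiguity. Second, your final worry is misplaced: the right-hand side is already $D_pE$-valued because $\Reg_p(E/\Q)$ is defined as an element of $D_pE$ in the supersingular setting, so the identity is an equality of vectors and no descent to $\Q_p$ is required; indeed the paper exploits exactly the two components of this vector identity separately.
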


The signed $p-$adic $L-$functions constructed by Pollack are related to the one constructed by Perrin Riou. That relationship leads us to our second main theorem:

\begin{theorem}\label{supersingular theorem}
 Let E be an elliptic curve over $\Q$ of rank 1 and $p$ be a prime of supersingular reduction. Let $P$ be a generator of the free Mordell Weil group and $Q=C_E \cdot N_p \cdot P$. Write $Q=(\frac{a}{d^2}, \frac{b}{d^3})$. Assume $p$ does not divide $C_E$ and $\# \Sh(E/\Q)$ and assume conjecture \ref{conjecture bsd ss} . Then: 
 
 \begin{enumerate}
    \item \begin{equation}
        \lambda_p^+(E)=1 \iff a(Q)^{p-1} \not\equiv 1 \mod p^2 
    \end{equation}
    \item If $\lambda_p^+(E)>1$ then 
    \begin{equation}\label{minus equivalence}
        \lambda^-_p(E)=1 \iff d(Q) \not\equiv 0\mod p^2
    \end{equation}
    
 \end{enumerate}

\end{theorem}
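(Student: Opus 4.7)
The plan is to adapt the strategy of Theorem~\ref{ordinary theorem} to the supersingular setting, replacing the single $p$-adic $L$-function by Pollack's factorization of the Perrin-Riou $L$-function into signed components. Specifically, I would start from Pollack's identity
\[
\mathcal{L}_p(E,T) \;=\; \log_p^+(T)\,\omega_+\,L_p^+(E,T) \;+\; \log_p^-(T)\,\omega_-\,L_p^-(E,T)
\]
in $D_pE\otimes\Q_p[[T]]$, where $\{\omega_+,\omega_-\}$ is a Dieudonn\'e basis compatible with the Frobenius and $\log_p^\pm$ are Pollack's half-logarithms (for $p\geq 5$ supersingular, $a_p=0$, yielding the cleanest form). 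For rank one, Conjecture~\ref{conjecture bsd ss} pins the order of vanishing at $T=0$ at $1$ and identifies the leading coefficient, up to explicit rational factors involving $\prod_v c_v$, the Euler-factor corrections at $p$, $\#\Sh(E/\Q)$ and $\#E(\Q)_{tors}$, with the Bernardi-Perrin-Riou regulator $\Reg_p(E/\Q)\in D_pE$.

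Next, comparing the Taylor expansion of $\mathcal{L}_p$ at $T=0$ with the factorization and decomposing in the basis $\{\omega_+,\omega_-\}$ yields explicit relations between the two coordinates $\Reg_p^\pm$ of the regulator and the leading coefficients of $L_p^\pm(E,T)$. Under the hypotheses $p\nmid C_E,\,\#\Sh(E/\Q)$, together with the automatic $p\nmid\#E(\Q)_{tors}$ coming from $\#\tilde{E}(\F_p)=p+1$ in the supersingular case, the auxiliary factors become $p$-adic units. It then remains to compute $\Reg_p^\pm$ in the local coordinates of $Q$. For $Q=(a/d^2,b/d^3)\in E^1(\Q_p)$, expanding the Bernardi-Perrin-Riou extended logarithm $\log_E^{\mathrm{BP}}:\widehat{E}(\Z_p)\to D_pE$ in the parameter $t=-x/y$, the two coordinates should pair, up to $p$-adic units, with $\log_p(a(Q))$ and $d(Q)$ respectively. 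The numerical criterion then follows from the elementary identity $v_p(a^{p-1}-1)=v_p(\log_p(a))$ for $a\in\Z_p^\times$ and $p\geq 3$: the condition $a(Q)^{p-1}\not\equiv 1\pmod{p^2}$ translates to $v_p(\log_p(a(Q)))=1$, and $d(Q)\not\equiv 0\pmod{p^2}$ to $v_p(d(Q))=1$ (the minimum possible, since $Q\in E^1(\Q_p)$ forces $v_p(d(Q))\geq 1$). The hypothesis $\lambda_p^+>1$ in part~(2) guarantees that the $\omega_+$-contribution to the leading coefficient vanishes to higher order, so that $\lambda_p^-=1$ is controlled purely by $v_p(\Reg_p^-)=v_p(d(Q))$.

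The main obstacle will be the explicit identification of the two coordinates of $\log_E^{\mathrm{BP}}(Q)$ with $\log_p(a(Q))$ and $d(Q)$ in Pollack's Dieudonn\'e basis. This requires a careful expansion of the Mazur-Tate/Bernardi $p$-adic sigma function in the formal parameter $t=-x/y$, matching the resulting two-component object with the chosen basis of $D_pE$, and tracking normalizations so that one coordinate picks out a quantity proportional to $\log_p(a(Q))$ and the other proportional to $d(Q)$. Secondary technical work includes verifying $\mu_p^\pm=0$ under the hypotheses, which should follow from the $p$-adic unit conditions on the auxiliary factors combined with the main conjecture for Kobayashi's signed Selmer groups.
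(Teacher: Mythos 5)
Your plan follows essentially the same route as the paper: Pollack's signed decomposition of the Perrin--Riou $L$-series, Conjecture \ref{conjecture bsd ss} to identify the first Taylor coefficients of $G^{\pm}$ with the two coordinates of $(1-\phi)^2\Reg_p/p$ in the basis $(\omega,\phi(\omega))$, and then the local expansion of the two height components at $Q$. The identifications you flag as the main obstacle are exactly what the paper carries out: the plus coordinate is governed by $h_\eta=\log_p(\sigma_p(Q)/d(Q))$, reduced to the Fermat quotient of $a(Q)$ via the Weierstrass equation as in the ordinary case, and the minus coordinate by $h_\omega=\log_{\hat E}(Q)^2\sim (a d/b)^2$, whose valuation detects $p^2\mid d(Q)$; the hypothesis $\lambda_p^+>1$ is needed precisely to kill the $h_\eta$ cross-term that $(1-\phi)^2$ introduces into the minus coordinate, as you anticipate.
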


\subsection{Non-minimal models}

Finally, as it seems to have been overlooked in the literature so far, we make the following remark:

By definition, if the elliptic curve is $p-$\textbf{minimal}, then $$[E(\Q_p):E^0(\Q_p)]=c_p.$$ However, if the curve is not minimal, the value of the index will change:
\begin{theorem} \label{tamagawa theorem}
 If $E/\Q$ is not $p-$ minimal, let $E_{\min}$ be a $p$-minimal model and $\Delta, \Delta_{\min}$ the discriminants of $E$ and $E_{\min}$ respectively. Let $k=\frac{v_p(\Delta)-v_p(\Delta_{\min})}{12}$, where $v_p$ is the $p-$adic valuation. Then,
  $$E^n(\Q_p) \simeq E_{\min}^{n+k}(\Q_p)$$
In particular if $k\geq 1$: $$[E(\Q_p):E^0(\Q_p)]=[E_{\min}(\Q_p):E^k_{\min}(\Q_p)]=c_pN_pp^{k-1}$$
\end{theorem}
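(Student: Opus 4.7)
The plan is to produce an explicit isomorphism $\phi\colon E_{\min}\xrightarrow{\sim} E$ over $\Q_p$ given by an admissible change of variables
\[\phi(x',y') = \bigl(u^2 x' + r,\; u^3 y' + s u^2 x' + t\bigr),\]
with $u \in \Z_p$ (which, after absorbing a unit, we may take as $p^k$) and $r, s, t \in \Z_p$, and then to track how $\phi$ interacts with the formal-group filtrations and the reduction map. The discriminant transformation law $\Delta = u^{12}\Delta_{\min}$ forces $v_p(u) = k$.

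For $n \geq 1$, I would compare the formal-group parameters $z = -x/y$ on $E$ and $z' = -x'/y'$ on $E_{\min}$. The inverse $\phi^{-1}$ sends $(x,y)\mapsto \bigl((x-r)/u^2,\,(y-s(x-r)-t)/u^3\bigr)$, which gives
\[z'\circ \phi^{-1} = -u\cdot\frac{x-r}{y-s(x-r)-t}.\]
For $P \in E^n(\Q_p)$ with $n\geq 1$ one has $v_p(x(P))=-2n$ and $v_p(y(P))=-3n$ while $r,s,t\in\Z_p$, so $v_p(x-r)=v_p(x)$ and $v_p(y-s(x-r)-t)=v_p(y)$; hence $v_p(z'(\phi^{-1}(P)))=k+v_p(z(P))$. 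This proves $\phi^{-1}(E^n(\Q_p)) = E_{\min}^{n+k}(\Q_p)$ for every $n \geq 1$.

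The case $n = 0$ is more delicate, since ``non-singular reduction'' is model-dependent. A direct computation using Silverman's transformation formulas (equivalently, the observation that the reduced map $\tilde\phi\colon \tilde E_{\min}\to\tilde E$ is constant on the affine part of $\tilde E_{\min}$) shows that modulo $p$ the equation of $E$ defines a singular cubic whose unique singular point is $(\bar r, \bar t)$. One then checks case-by-case: for $P'$ at formal-group depth $j$ in $E_{\min}$, if $0 \leq j \leq k - 1$, then $u^2 x'(P')$ and $u^3 y'(P')$ both have strictly positive $p$-adic valuation, so $\phi(P')\in E(\Z_p)$ reduces to $(\bar r, \bar t)$; if $j = k$, the factors $u^2, u^3$ exactly absorb the denominators of $x', y'$ and $\phi(P')$ reduces to a non-singular affine point; and if $j \geq k + 1$, then $\phi(P')$ lies in the formal group of $E$ and reduces to $\infty$. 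Collecting cases, $\phi(E_{\min}^k(\Q_p)) = E^0(\Q_p)$, completing the isomorphism claim for all $n \geq 0$.

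The index formula then follows from the standard quotients of the filtration on $E_{\min}$: $[E_{\min}(\Q_p):E_{\min}^0(\Q_p)] = c_p$ by the Kodaira--N\'eron component group, $[E_{\min}^0(\Q_p):E_{\min}^1(\Q_p)] = N_p$ by the reduction exact sequence, and $[E_{\min}^j(\Q_p):E_{\min}^{j+1}(\Q_p)] = p$ for each $j \geq 1$ by the formal group. Multiplying yields $[E(\Q_p):E^0(\Q_p)] = c_pN_pp^{k-1}$. The main obstacle is the $n = 0$ identification: rather than a pure formal-group computation, one must analyze the singular locus of the special fiber of the non-minimal Weierstrass equation and verify that it is precisely the image, under $\phi$, of $E_{\min}^j(\Q_p)$ for all $j < k$.
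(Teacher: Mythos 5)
Your proof is correct, but it takes a genuinely different route from the paper's. The paper reduces to the coordinate change $[p^k,0,0,0]$ and argues entirely through division polynomials: it records how $f_2$ and $f_3$ scale under the change of model (Corollary \ref{cor change of weier model}), derives $v_p(f_m(P))=(1-m^2)v_p(d)+v_p(m)$ for $P\in E^1(\Q_p)$ from Lemma \ref{lemma 2} and Proposition \ref{prop u_n(P) is unit}, and then invokes Ayad's criterion (Proposition \ref{prop E^0(Q_p)}), which detects membership in $E^0(\Q_p)$ via the positivity of $v_p(f_2)$ and $v_p(f_3)$, to conclude that a point at depth $k$ on the minimal model has non-singular reduction on the non-minimal one. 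You instead handle the layers $n\geq 1$ by tracking the valuation of the formal parameter $z=-x/y$, and you settle the crucial $n=0$ layer geometrically, by locating the unique singular point $(\bar r,\bar t)$ of the reduced non-minimal equation and sorting the formal-group depths according to whether they reduce to it, to a non-singular affine point, or to $\infty$. Your route is more self-contained (no division polynomials, no appeal to Ayad's theorem), works with a general admissible change $[u,r,s,t]$ rather than first reducing to $[u,0,0,0]$, and makes both inclusions of $\phi(E_{\min}^k(\Q_p))=E^0(\Q_p)$ explicit, whereas the paper's write-up leaves the well-definedness of the inverse map rather terse; the paper's route, in exchange, reuses machinery it needs elsewhere for the computational criterion $C_E(P)$. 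Two small points to tighten: your case analysis at depths $0\leq j\leq k-1$ should explicitly include the points of $E_{\min}(\Q_p)\setminus E_{\min}^0(\Q_p)$ (their coordinates are still in $\Z_p$, so the same estimate $v_p(u^2x'),v_p(u^3y')>0$ applies and nothing changes), and the identification of $(\bar r,\bar t)$ as the unique singularity of the special fiber deserves a line of verification in residue characteristics $2$ and $3$, where the partial-derivative computation degenerates.
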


In practice, this says that in general, the value of $C_E$ will not be equal to $\mathrm{Tam}(E)$ and the above theorem provides an algorithm to explicitly compute it. On the algorithm used for the computation of the p-adic height in \cite{mazur2006computation} (page 14, Algorithm 3.4), this remark seems to be overlooked. This had led to a wrong implementation of the algorithm in the software SAGE, which has recently been corrected.

If one specializes this theorem to the case of the Short Weierstrass model, one gets the following:

\begin{corollary}\label{cor short weier}
    Let $E: y^2=x^3+Ax+B$ be an elliptic curve over $\Q$ given in a \textbf{short} Weierstrass model.
    Let $$m=\rm{lcm}(N_2(E),N_3(E))$$
    Then  $[E(\Q):E^0(\Q)]$ divides $m\cdot Tam(E)$.
\end{corollary}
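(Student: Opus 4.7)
The approach is a local-to-global reduction via the embedding $E(\Q)/E^0(\Q) \hookrightarrow \bigoplus_{p} E(\Q_p)/E^0(\Q_p)$ (valid because nonsingularity of reduction is a local condition at each bad prime), combined with Theorem \ref{tamagawa theorem} applied at each bad prime. The plan is to control the non-minimality degree $k_p := (v_p(\Delta)-v_p(\Delta_{\min}))/12$ of the given short Weierstrass model at every prime $p$, so that the local indices can be assembled to $m \cdot \Tam(E)$.

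Fixing a globally minimal Weierstrass model $E_{\min}$ of $E$ over $\Z$, I would verify that the standard change of variables from $E_{\min}$ to short form produces $Y^2 = X^3 - 27 c_4 X - 54 c_6$, whose discriminant equals $6^{12}\,\Delta_{\min}$ (using the identity $c_4^3 - c_6^2 = 1728\,\Delta$). This forces $k_p \leq v_p(6)$, so $k_p = 0$ for $p \geq 5$ and $k_p \leq 1$ for $p \in \{2,3\}$. Theorem \ref{tamagawa theorem} then gives $[E(\Q_p):E^0(\Q_p)] = c_p$ for $p \geq 5$ and $[E(\Q_p):E^0(\Q_p)] \mid c_p \cdot N_p$ for $p \in \{2,3\}$; in the latter case the quotient fits in an extension
\[
0 \longrightarrow \widetilde{E}_{ns}(\F_p) \longrightarrow E(\Q_p)/E^0(\Q_p) \longrightarrow \Phi_p \longrightarrow 0,
\]
whose exponent divides $N_p\,c_p$. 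Combining by taking the lcm over bad primes of the local exponents yields that the exponent of $E(\Q)/E^0(\Q)$ divides $\lcm(N_2,N_3)\cdot \lcm_p c_p = m \cdot \Tam(E)$.

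The main obstacle is the second step: guaranteeing that the non-minimality degree at $p \in \{2,3\}$ really is at most one, which comes down to ruling out that the given short form admits a further $u = 2$ or $u = 3$ substitution on top of the $6^{12}$-scaling identified above. A secondary technical point is passing from the lcm bound natural for the exponent of $E(\Q)/E^0(\Q)$ to the divisibility for the index asserted in the corollary; this should follow from the (near-)cyclicity of the component groups $\Phi_p$ away from a handful of additive Kodaira types, together with the explicit filtration structure of the local quotients at $p = 2, 3$.
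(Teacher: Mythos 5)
Your proposal follows essentially the same route as the paper: reduce to the local indices $[E(\Q_p):E^0(\Q_p)]$ via Theorem \ref{tamagawa theorem}, and show that a short Weierstrass model fails to be $p$-minimal only at $p=2,3$ and there with non-minimality degree $k_p\le 1$. The paper justifies the latter by citing the existence of a change of coordinates $[\tfrac1p,r,s,t]:E_{\min}\to E$ for $p\in\{2,3\}$, which is your $6^{12}$-discriminant computation (correct: $\Delta(Y^2=X^3-27c_4X-54c_6)=2^{12}3^{12}\Delta_{\min}$) stated less explicitly; so your ``main obstacle'' is not really an obstacle, except that both arguments tacitly assume the short model is reduced, i.e.\ that no prime satisfies $p^4\mid A$ and $p^6\mid B$. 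For an arbitrary integral short model the statement is false (take $y^2=x^3+p^4Ax+p^6B$ with $p\ge 5$, for which $k_p\ge 1$ and $N_p$ enters the local index), so you are right to flag this, but the resolution is an implicit hypothesis on the model, not a lemma to be proved. Your ``secondary technical point'' is the one place where you attempt more than the paper delivers: the paper stops at the local index computation and never assembles the global index, and your proposed repair via near-cyclicity of the component groups $\Phi_p$ will not work in general, since non-cyclicity of $E(\Q)/E^0(\Q)$ can arise globally from independent elements of $E(\Q)$ surjecting onto distinct local quotients even when each $\Phi_p$ is cyclic. What your argument does cleanly establish is the exponent bound $C_E\mid m\cdot\Tam(E)$ (equivalently, divisibility of the index by the \emph{product} of the local indices rather than their lcm), and the exponent $C_E$ is the quantity the rest of the paper actually uses.
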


We remark that computationally for a fixed point $P$, one can compute $C_E(P)$ instead of $C_E$.

\begin{proposition}\label{prop u_2(P) is unit}
Fix a prime $p$. Let $E/\Q$ be an elliptic curve with fixed Weierstrass model and $P\in E(\Q)$. Let $P=(\frac{a}{d^2},\frac{b}{d^3})$ and write $u_n=\frac{d^{n^2}f_n}{d_n}$, where $f_n$ is the $n-$th division polynomial. Then $$P\in E^0(\Q_p) \iff u_2\in \Z_p^\times $$
\\ In particular $$C_E(P)=\min\{n: |u_2(nP)|=1\}$$
\end{proposition}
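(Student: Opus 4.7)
The plan is to expand the duplication formula explicitly, extract an exact expression for $v_p(u_2)$ in terms of the coordinates of $P$, and then verify the equivalence by a short case analysis based on the reduction $\tilde P$ of $P$ at $p$. Concretely, setting
$$\alpha = 3a^2 + 2 a_2 a d^2 + a_4 d^4 - a_1 bd, \qquad \beta = 2b + a_1 a d + a_3 d^3 = d^3 f_2(P),$$
the duplication formula on the Weierstrass model gives $x(2P) = A/B^2$ with $B = d\beta = d^4 f_2(P)$ and $A = \alpha^2 + a_1 \alpha d\beta - (a_2 d^2 + 2a)\beta^2$. Writing $x(2P) = a_2/d_2^2$ in lowest terms and comparing $p$-adic valuations (using $\gcd(a_2,d_2)=1$) yields
$$v_p(d_2) = \max\!\bigl(0,\ v_p(B) - v_p(A)/2\bigr), \qquad v_p(u_2) = v_p(B) - v_p(d_2) = \min\!\bigl(v_p(B),\ v_p(A)/2\bigr),$$
so $u_2 \in \Z_p^\times$ if and only if $v_p(B) = 0$ or $v_p(A) = 0$.

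Next I would split into cases on $\tilde P$. If $P \in E^1(\Q_p)$, so $v_p(d)\geq 1$ and $v_p(a)=v_p(b)=0$, then reducing $\bmod p$ gives $\alpha \equiv 3a^2$ and $\beta \equiv 2b$, hence $A \equiv 9a^4 - 8ab^2 \pmod{p}$; the Weierstrass equation reduced $\bmod p$ (with $d\equiv 0$) collapses to $b^2 \equiv a^3$, so $A \equiv a^4 \pmod p$, a unit. If $P \in E^0(\Q_p)\setminus E^1(\Q_p)$ and $\tilde P$ is not $2$-torsion, then $f_2(\tilde P)\neq 0$, so $v_p(\beta)=v_p(B)=0$. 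If $P \in E^0(\Q_p)\setminus E^1(\Q_p)$ and $\tilde P$ is a (necessarily non-singular) $2$-torsion point, then $\partial F/\partial y|_{\tilde P}=f_2(\tilde P)=0$, so non-singularity forces $\partial F/\partial x|_{\tilde P}\neq 0$; identifying $d^4\cdot \partial F/\partial x|_{P}=-\alpha$ gives $v_p(\alpha)=0$, and with $v_p(\beta)\geq 1$ we get $A \equiv \alpha^2 \pmod p$, a unit. In each of these cases $u_2 \in \Z_p^\times$.

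For the converse, if $P \notin E^0(\Q_p)$ then $\tilde P$ is the (finite) singular point of $\tilde E$, where both partial derivatives vanish. This translates into $v_p(\alpha) \geq 1$ and $v_p(\beta) \geq 1$, hence $v_p(B)\geq 1$ and $v_p(A)\geq 2$, which forces $v_p(u_2)\geq 1$ and $u_2 \notin \Z_p^\times$. Together the four cases establish the equivalence $P \in E^0(\Q_p) \iff u_2 \in \Z_p^\times$.

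The \emph{in particular} statement is then immediate: since $d_2 \mid B$ at every prime, $u_2(nP) \in \Z$, and $nP \in E^0(\Q)$ iff $nP \in E^0(\Q_p)$ for every prime $p$, iff $u_2(nP)$ is a $p$-adic unit for every $p$, iff $|u_2(nP)|=1$. The main technical subtlety is the algebraic cancellation $9a^4 - 8ab^2 \equiv a^4 \pmod{p}$ in the formal-group case: it is this identity, forced by the Weierstrass equation, that makes the formal-group doubling compatible with the duplication formula and produces the required unit.
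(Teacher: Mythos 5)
Your proof is correct, but it takes a genuinely different route from the paper's. The paper treats Proposition \ref{prop u_2(P) is unit} as the $n=2$ case of the more general Proposition \ref{prop u_n(P) is unit} ($P\in E^0(\Q_p)\iff u_n(P)\in\Z_p^\times$ for every $n\geq 2$): the forward implication is quoted from Wuthrich, and the converse is deduced from Ayad's characterization of singular reduction in terms of the valuations of the division polynomials $f_n, g_n$ (Proposition \ref{prop E^0(Q_p)}) together with the valuation identity of Lemma \ref{lemma 2}. You instead work only with $n=2$ and make everything explicit: the duplication formula gives $x(2P)=A/B^2$ with $B=d^4f_2(P)$, the coprimality of numerator and denominator yields the clean identity $v_p(u_2)=\min\bigl(v_p(B),\,v_p(A)/2\bigr)$, and the equivalence follows from a case analysis on $\tilde P$ (formal group, non-$2$-torsion, nonsingular $2$-torsion, singular), with the Weierstrass relation $b^2\equiv a^3\pmod p$ supplying the crucial cancellation $A\equiv a^4$ in the formal-group case and the identification of $\alpha,\beta$ with the partial derivatives handling the remaining cases. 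Your argument is self-contained and elementary --- it needs neither Ayad's theorem nor Wuthrich's result --- which is a real advantage; the trade-off is that it is specific to $n=2$, whereas the paper's general-$n$ statement is reused elsewhere (in Proposition \ref{prop d(P)} and in the proof of Theorem \ref{tamagawa theorem}, where $u_m(P)\in\Z_p^\times$ is invoked for arbitrary $m$). For the proposition as stated, $n=2$ is all that is required, and your derivation of the \emph{in particular} clause (integrality of $u_2(nP)$ plus the local-global passage over all primes) matches the intended reading. Only cosmetic issues remain: the symbol $a_2$ is overloaded (Weierstrass coefficient versus numerator of $x(2P)$), and it is worth stating explicitly that $v_p(A)$ is automatically even whenever $v_p(d_2)>0$, so the half-integer $v_p(A)/2$ causes no trouble in the valuation formula.
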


\subsection{Overview}
In Section 2, we provide some examples to illustrate the main theorems. In Sections 3, 4 and 5 we provide the necessary background regarding the $p-$adic regulator, the $p-$adic $L$-series and the main conjecture respectively. In Section 6, we prove Theorems \ref{ordinary theorem} and \ref{supersingular theorem} and we provide some numerical data on Section 7. Finally, Section 8 is devoted to the proof of Theorem \ref{tamagawa theorem}.

\section{Examples}

We first provide a couple of examples to show the above theorems in practice. We used SAGE (\cite{SageMath}) for the computations.

\begin{example}
\end{example}
    Let $E:y^2+y=x^3-x$ which is rank 1 over $\Q$, given in the global minimal model and which has label $"37a1"$ in the Cremona table. Then using Sage, we find that $$Tam(E)=1$$ and $P=(0,-1)$ is a generator of the free Mordell-Weil group. Since\\  $\mathrm{Tam}(E)=1$, we have $$E(\Q)=E^0(\Q).$$

    The corresponding short Weierstrass form of $E$ is $E_1:y^2=x^3-16x+16$ with $P_1=(0,-4) \in E_1$ the generator. Reducing at the prime $2$ one gets $\tilde E_1:y^2=x^3$ which is clearly not smooth and $\tilde P_1=(0,0)$, which is the singular point. Therefore $$P_1\not \in E_1^0(\Q)$$

    This agrees with Proposition \ref{prop u_2(P) is unit}, as $u_2(P_1)=-8$ which is divisible by $2$. In fact, this suggests that $2$ is the only prime in which $P_1$ has singular reduction. Further, we find that $$u_2(5P_1)=1$$ which shows that $5P_1$ has non-singular reduction everywhere. In fact, one can compute that $$[E(\Q):E^0(\Q)]=N_2(E)=5$$
    which verifies Corollary \ref{cor short weier}

    For the same elliptic curve on its minimal model, if we consider the prime $p=13$, we find on the LMFDB table that $\lambda_{13}(E)=3$. On the other hand, we compute $N_{13}=16$ and let $$Q=16P=\left (\frac{a}{d^2}, \frac{b}{d^3} \right )$$ 
    We compute that $a(Q)\equiv -23 \mod p^2$ and it indeed turns out that $$23^{12}\equiv 1 \mod 13^2$$ which verifies that $\mu_{13}(E)+\lambda_{13}(E)>1$ by Theorem \ref{ordinary theorem}.

    \begin{example}
    \end{example}
    In light of the formula in Theorem \ref{Theorem 2.4}, one might expect that if $p$ divides $N_p$, then $\mu_p(E)+\lambda_p(E)>1$. The following example shows that this is not always the case. This happens because the term $N_p^2$ in the formula gets cancelled with $\Reg_p(E/\Q)$, as can be seen in the proof of Theorem \ref{ordinary theorem}.
\\
    We take the elliptic curve $E:y^2 + y = x^3 + x^2$. Then $p=5$ is a good ordinary prime which does not divide $\Tam(E), |E(\Q)_{tors}|$ or $|\Sh(E/\Q)|$. We find that $N_5=10$ but $\mu_5(E)=0$ and $\lambda_5(E)=1$ .

\section{p-adic Regulator}
To describe the construction of the $p$-adic regulator, we first need to start with the $p$-adic sigma function. There are many ways to define it and we will only present a couple of them. We follow \cite{mazurtate1991}:

\subsection{p-adic sigma function}

\begin{theorem}[Mazur-Tate]\label{Theorem 4.3}
Let $E/\Q$ be an elliptic curve with good ordinary reduction at a prime $p$ and Weierstrass equation $E:y^2+a_1xy+a_3y=x^3+a_2x^2+a_4x+a_6$. The $p-$adic sigma function is the unique function $\sigma_p:E^1(\Q_p)\cong \hat E(\mathfrak p)\to \Q_p$ satisfying any one of the following properties:
\begin{enumerate}

    \item If $m\in \Z$ and $Q \in \hat E(\bar \Z_p)$, then 
    $$\sigma_p(mQ)=\sigma_p(Q)^{m^2}f_m(Q)$$ where $f_m$ is the $m-$th division polynomial relative to the invariant differential $\omega$.
    
    \item $\sigma_p$ is odd function with $\sigma_p(t)\in t\Z_p[[t]]$ and there is a constant $c\in \Z_p$ such that the following differential equation is satisfied: $$x(t)+c=-\frac{d}{\omega}\left(\frac{1}{\sigma_p}\frac{d\sigma_p}{\omega}\right)$$
    In fact the constant $c$ can be explicitly computed and is equal to $$c=\frac{a_1^2+4a_2}{12}-\frac{1}{12}\mathbb E_2(E,\omega)$$ where $\mathbb E_2(E,\omega)$ is the value of the \textit{Katz $p$-adic weight 2 Eisenstein series at} $(E,\omega)$
\end{enumerate}
\end{theorem}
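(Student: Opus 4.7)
The plan is to establish characterization (2) first (giving existence and uniqueness of $\sigma_p$ from the differential equation together with the normalization) and then to show that the resulting function satisfies the functional equation (1), which in turn forces any function satisfying (1) to agree with it.

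For characterization (2), I would work in the formal group coordinate $t$, in which $x = x(t) = t^{-2} + \cdots$ has a Laurent expansion determined by the Weierstrass equation and the invariant differential $\omega$. Writing $\sigma_p(t) = t + \sum_{n \geq 2} s_n t^n$, the differential equation $x(t) + c = -(d/\omega)\bigl((1/\sigma_p)(d\sigma_p/\omega)\bigr)$ unfolds as a triangular recursion that expresses each $s_n$ as a polynomial over $\Q$ in the $a_i$, the constant $c$, and the earlier coefficients $s_k$. This gives immediate existence and uniqueness of $\sigma_p$ in $t\Q_p[[t]]$ for any choice of $c$. The oddness of $\sigma_p$ is automatic: the ODE is $[-1]$-equivariant and the normalization is odd to first order, so uniqueness forces $\sigma_p(-t) = -\sigma_p(t)$.

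The central difficulty is integrality, i.e. showing that $s_n \in \Z_p$ for all $n$. Generic values of $c$ introduce denominators that do not cancel; only the particular value $c = (a_1^2+4a_2)/12 - \mathbb E_2(E,\omega)/12$ makes the recursion integral. This is where the good ordinary hypothesis enters essentially: the Katz $p$-adic weight 2 Eisenstein series $\mathbb E_2(E,\omega)$ is a genuine $p$-adic modular form, and its specialization at the canonical pair $(E,\omega)$ is $p$-integral thanks to the unit root of Frobenius on the crystalline cohomology. Following Mazur--Tate, I would verify integrality by exhibiting $\sigma_p$ as a Coleman-type $p$-adic limit (for instance, via a $p$-adic line integral of a suitable differential of the second kind, or via descent from a universal deformation), and independently checking that this construction solves the same ODE; any other constant then fails integrality, pinning down the formula above.

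For the equivalence with characterization (1), logarithmically differentiating $\sigma_p(mQ) = \sigma_p(Q)^{m^2} f_m(Q)$ twice with respect to $\omega$ and using standard identities for division polynomials yields the ODE of (2); conversely, starting from (2) one constructs $\sigma_p$ and then checks that both sides of (1) satisfy the same recursion in $m$ and agree for small $m$. Uniqueness under (1) follows because specializing $m=2$ and $m=3$ already determines enough coefficients of $\sigma_p$ to propagate via the recursion. The main obstacle throughout is the integrality step, which is the technical heart of the Mazur--Tate construction and the reason the specific Eisenstein-series formula for $c$ is unavoidable.
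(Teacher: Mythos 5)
First, note that the paper does not prove this statement: it is quoted verbatim as background from Mazur--Tate \cite{mazurtate1991}, so there is no internal proof to compare your attempt against. Judged on its own terms, your outline follows the right general strategy (solve the ODE recursively, isolate integrality as the crux, relate the two characterizations by logarithmic differentiation), but two steps as written would fail. The first is the oddness claim. The ODE is equivariant under the formal group inverse $[-1](t)$, not under $t\mapsto -t$, and these differ unless $a_1=a_3=0$; consequently $\sigma_p$ is \emph{not} an odd power series in $t$ in general (the paper's own expansion of $\sigma_p$ right after the theorem has a nonzero $\tfrac12 a_1 t^2$ term, and observation (i) there makes exactly this point). ``Odd'' in the theorem means $\sigma_p([-1](t))=-\sigma_p(t)$, and your uniqueness argument should be run with respect to that involution.

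The second, more serious, gap is your uniqueness argument for characterization (1). If $\lambda$ denotes the formal logarithm, then $\lambda([m](t))=m\lambda(t)$, so for every $a\in\Q_p$ the function $\sigma_p(t)\exp\bigl(a\lambda(t)^2/2\bigr)$ satisfies the same division-polynomial functional equation $\sigma(mQ)=\sigma(Q)^{m^2}f_m(Q)$. Concretely, in the coefficient recursion you propose, the coefficient of $t^3$ is degenerate for \emph{every} $m$ (the relevant linear coefficient is $m^n-m^3$, which vanishes at $n=3$), and the one-parameter family above perturbs exactly that coefficient. So $m=2$ and $m=3$ do not determine $\sigma_p$; property (1) only pins it down once one imposes $\sigma_p\in t\Z_p[[t]]$ and shows that at most one member of the family is integral --- which is the same integrality problem you defer in characterization (2). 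That integrality step (where good ordinary reduction, the unit root of Frobenius, and the specific value $c=\tfrac{a_1^2+4a_2}{12}-\tfrac{1}{12}\mathbb E_2(E,\omega)$ enter) is the entire content of the Mazur--Tate theorem, and your proposal names it but does not carry it out; as it stands the argument is a plan for a proof rather than a proof.
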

We shall see later that property (1) is also shared by the denominator of the point $P$ and follows from Proposition \ref{prop u_2(P) is unit}, which can be rephrased in the following way:
\begin{proposition}\label{prop d(P)}
    Let $E/\Q$ be an elliptic curve and $P\in E(\Q)$. Write $P=(\frac{a}{d^2},\frac{b}{d^3})$ and let $d(P):=d$. Then for any $n\geq 2$,
    $$P\in E^0(\Q)\iff d(nP)=|d(P)^{n^2}f_n(P)|$$
\end{proposition}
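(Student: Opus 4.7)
The plan is to recognize Proposition \ref{prop d(P)} as a reformulation of Proposition \ref{prop u_2(P) is unit} (suitably extended to general $n \geq 2$). Let $u_n(P) := d(P)^{n^2} f_n(P)/d(nP)$ as in that proposition. The equality $d(nP) = |d(P)^{n^2} f_n(P)|$ is precisely the assertion that $u_n(P) = \pm 1$, which is equivalent to $u_n(P)$ being a $p$-adic unit for every prime $p$, provided we first know that $u_n(P) \in \Z$.

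First I would verify that $u_n(P)$ is always a rational integer, i.e.\ $d(nP) \mid d^{n^2} f_n(P)$. This is a routine calculation using the standard multiplication-by-$n$ formula $x(nP) = \phi_n(P)/f_n(P)^2$: substituting $P = (a/d^2, b/d^3)$ and clearing denominators shows that $x(nP)$ equals a fraction whose denominator divides $d^{2n^2}\, f_n(P)^2$, and since $d(nP)^2$ is the reduced denominator of $x(nP)$, it must divide this quantity. Combining this integrality with the local-global identity $E^0(\Q) = E(\Q) \cap \bigcap_p E^0(\Q_p)$ and Proposition \ref{prop u_2(P) is unit} (extended to all $n \geq 2$) yields
$$P \in E^0(\Q) \iff u_n(P) \in \Z_p^\times \text{ for every } p \iff u_n(P) = \pm 1 \iff d(nP) = |d^{n^2} f_n(P)|,$$
which is the desired equivalence.

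The main obstacle is extending Proposition \ref{prop u_2(P) is unit} from $n = 2$ to arbitrary $n \geq 2$. One approach is to invoke Theorem \ref{Theorem 4.3}(1): on the formal group $E^1(\Q_p)$ the $p$-adic sigma function satisfies $\sigma_p(nQ) = \sigma_p(Q)^{n^2} f_n(Q)$, and since $\sigma_p(t) \in t\,\Z_p[[t]]$ has unit leading coefficient, the $p$-adic valuation of $f_n(Q)$ is pinned down by the valuations of the formal group parameters of $Q$ and $nQ$. For a point of $E^0(\Q_p) \setminus E^1(\Q_p)$, one first translates into $E^1(\Q_p)$ by multiplication by an integer coprime to $p$ that kills the component group at $p$, and then uses that $E^0(\Q_p)$ is a subgroup of $E(\Q_p)$ to propagate the conclusion from $n = 2$ to general $n$ via an induction on the recurrence satisfied by $f_n$.
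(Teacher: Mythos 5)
Your global reduction is exactly the paper's: the paper proves the local statement $P\in E^0(\Q_p)\iff u_n(P)\in\Z_p^\times$ for all $n\geq 2$ (Proposition \ref{prop u_n(P) is unit}), notes the integrality $d_n\mid d^{n^2}f_n$, and then obtains Proposition \ref{prop d(P)} by ``varying through all the primes $p$,'' i.e.\ an integer that is a unit at every prime is $\pm 1$. That part of your argument is fine and matches the paper.

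The gap is in the step you yourself flag as the main obstacle: establishing the local statement for general $n$. Your proposed tool, Theorem \ref{Theorem 4.3}(1), is the Mazur--Tate $p$-adic sigma function, which exists only for primes of \emph{good ordinary} reduction. But the content of $P\in E^0(\Q_p)$ is concentrated precisely at the primes where the reduction of the given model is singular (at a prime of good reduction $E^0(\Q_p)=E(\Q_p)$ and there is nothing to decide), so the sigma function is unavailable exactly where the proposition has teeth. Moreover, the identity $\sigma_p(nQ)=\sigma_p(Q)^{n^2}f_n(Q)$ only controls $v_p(f_n(Q))$ for $Q$ in the formal group $E^1(\Q_p)$, whereas the hard implication is the converse one: if $P\notin E^0(\Q_p)$ (so $v_p(d)=0$ and $P$ is nowhere near $E^1(\Q_p)$), one must show $u_n(P)$ is \emph{not} a unit, and in the delicate subcase $nP\in E^1(\Q_p)$ one must show the strict inequality $v_p(f_n)>v_p(d_n)$, not merely $v_p(f_n)>0$. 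Your closing sentence (``induction on the recurrence satisfied by $f_n$'') gestures at what is needed but does not supply it. The paper's actual inputs are different and genuinely necessary: Wuthrich's result for the forward direction, Lemma \ref{lemma 2} ($v_p(d_n)=v_p(d)+v_p(n)$ on the formal group), and above all Ayad's theorem (Proposition \ref{prop E^0(Q_p)}), which says that singular reduction of $P$ forces $v_p(f_n)>0$ \emph{and} $v_p(g_n)>0$ for all $n\geq 2$; the extra positivity of $v_p(g_n)$, combined with $d_n\mid d^{n^2}f_n$, is what upgrades $v_p(f_n)\geq v_p(d_n)$ to the needed strict inequality. Without citing or reproving something of Ayad's type, your argument does not close.
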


The formula from (2) of the above theorem is what lets us compute the $p-$adic sigma function. See for example the Algorithm 3.1 on \cite{mazur2006computation}. Implementing that algorithm on a general elliptic curve, we find 
\begin{equation}\label{sigma eq}
\sigma_p(t)=t + \frac{1}{2} a_{1} t^{2} + \left(\frac{3}{8} a_{1}^{2} + \frac{1}{2} a_{2} - \frac{1}{2} c\right) t^{3} + \left(\frac{5}{16} a_{1}^{3} + \frac{3}{4} a_{1} a_{2} - \frac{3}{4} a_{1} c + \frac{1}{2} a_{3}\right) t^{4} + .... 
\end{equation}

We make some observations: 
\begin{enumerate}[label=(\roman*)]
    \item $\sigma_p$ being odd means that $\sigma_p(-P)=-\sigma_p(P)$ for suitable $P\in E(\Q)$. To translate this property on the formal group, we use the isomorphism 
    \begin{equation}\label{formal group eq}
\begin{tikzcd}
	{E^0(\mathbb Q_p)} & {\hat E(\mathbb \mathfrak p)\cong p\mathbb Z_p} \\
	{P=(x,y)=(\frac{a}{d^2},\frac{b}{d^3})} & {t=-\frac{x}{y}=-\frac{a}{b}d}
	\arrow["\sim", from=1-1, to=1-2]
	\arrow[maps to, from=2-1, to=2-2]
\end{tikzcd}
\end{equation}
    that sends $P=(x,y)$ to $t=-\frac{x}{y}$. Then $-P=(x,y-a_1x-a3)$ which is not in general sent to $-t$. This explains why there are even terms in \ref{sigma eq}. \\
    However if $a_1=a_3=0$ (e.g. the elliptic curve is in Short Weierstrass form), then $\sigma_p(t)\in t \Z_p[[t]]$ is an odd power series.
    \item To compute the $p-$adic sigma function, one first needs to compute the constant $c$ and in particular the Katz weight 2 $p-$adic  Eisenstein series which is done through Kedlaya's algorithm. The original algorithm required $p\geq 5$ using the minimal model of the elliptic curve. However, by changing the model, Balakrishnan shows how to compute it for $p=3$ as well (see \cite{balakrishnan3adic}).
    \item If $E$ is a CM elliptic curve of rank 1, then the value of $\mathbb E_2(E,\omega)$ is independent of $p$ and belongs to the field of complex multiplication of $E$, by \cite[Lemma~8.0.13]{katz1976E2}
\end{enumerate}
\subsection{The ordinary case}
We can now describe the construction of the $p$-adic height. We skip the theoretical background and only focus on the computational aspect. We follow \cite{mazur2006computation}:

\textbf{Definition:} Let $E$ be an elliptic curve over $\Q$ with good ordinary reduction at an odd prime $p$ and $P\in E^1(\Q_p)\cap E^0(\Q)$. The (cyclotomic) $p-$adic height is defined as \begin{equation}\label{pheight}
    h_p(P)= \log_p \frac{\sigma_p(P)}{d(P)}
\end{equation}
where $\log_p$ is the $p-$adic logarithm. \\ 
If $P\in E(\Q)$, we can find $m\in \N$ such that $Q:=mP \in E^1(\Q_p)\cap E^0(\Q)$. We then define $$h_p(P):= \frac{1}{m^2}h_p(Q)$$
We mention without proof that this definition is independent of the choice of Weierstrass model of the elliptic curve.

\textbf{Remark:} 
\begin{enumerate}
\item  If $E$ is given in the \textit{global minimal Weierstrass model}, then one can pick $$ m=\mathrm{lcm}(\mathrm{Tam}(E),N_p) $$  \\
The idea behind this is that $\mathrm{Tam}(E)\cdot P\in E^0(\Q)$ for all $P\in E(\Q)$ by definition of the Tamagawa numbers and $N_p=[E^0(\Q_p):E^1(\Q_p)]=[E(\Q_p):E^1(\Q_p)]$ since $p$ is a good prime.

\item If $E$ is NOT given in a global minimal Weierstrass model, then $\mathrm{Tam}(E)\cdot P$ will no longer necessarily land in $E^0(\Q)$. To find a suitable scalar multiple, we can use Theorem \ref{tamagawa theorem} in order to evaluate the index $[E(\Q):E^0(\Q)]$ in terms of the Tamagawa numbers. This index is of course independent of the point $P$. Alternatively, we can use Proposition \ref{prop u_2(P) is unit} to find the smallest scalar multiple $m$ such that $mP\in E^0(\Q)$. This depends on the point $P$, but is computationally more efficient.
\end{enumerate}

\subsection{Dieudonne Module} \label{Section Dieudonne}
In the supersingular case, there is no canonical $p-$adic height with values in $\Q_p$. Instead one has to use the Dieudonne Module $D_pE$. For our intents and purposes, it suffices to simply characterize the Dieudonne module as a 2-dimensional $\Q_p$-vector space generated by $\omega_E$ and $ \eta=x \omega_E$, where $\omega_E$ is the invariant differential. \\
There is an action of the Frobenius $\phi$ on $D_pE$, with characteristic polynomial $X^2+\frac{1}{p}$ (we are assuming $p\geq 5$ and hence that $a_p=0$). We  write
 $$\phi=\begin{bmatrix}
    \alpha & \beta \\ \gamma & \delta
\end{bmatrix}_{(\omega,\eta)}$$
We remark some useful properties that can be found in the proof of Lemme 2.1 in \cite{perrin2003arithmetique}
\begin{proposition}[Perrion-Riou--Bernardi]\label{prop Frob}
    With the above notation:
    \begin{enumerate}
        \item $\gamma \in \Z_p^\times$
        \item  $\alpha \in \Z_p$
        \item  $\ord_p(\beta)=-1$
    \end{enumerate}
\end{proposition}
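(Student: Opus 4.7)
The plan is to combine the algebraic constraints coming from the characteristic polynomial $X^{2}+\tfrac{1}{p}$ with an integrality input from the supersingular reduction. Taking the trace and determinant of $\phi$ yields $\alpha+\delta=0$ and $\alpha\delta-\beta\gamma=\tfrac{1}{p}$; equivalently $\delta=-\alpha$ and
\[
\beta\gamma \;=\; -\alpha^{2}-\tfrac{1}{p}.
\]
This is purely formal: the analytic content all has to come from integrality.

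The crucial ingredient is a $\Z_{p}$-lattice $L\subset D_{p}E$ on which the basis $(\omega,\eta)$ is integral, namely the integral de Rham/crystalline cohomology $H^{1}_{\mathrm{dR}}(E/\Z_{p})$. In the normalization used in \cite{perrin2003arithmetique}, $\phi$ equals $p^{-1}$ times the geometric Frobenius $F$, and $F$ preserves $L$, so the matrix of $p\phi$ in the basis $(\omega,\eta)$ has all four entries in $\Z_{p}$. I would then invoke supersingularity through the vanishing of the Hasse invariant: $F\bmod p$ is nilpotent, with image contained in and kernel containing the Hodge line $\F_{p}\cdot\omega$. Reading these two statements off the matrix of $F$ shows that the entire $\omega$-column of $F$ lies in $pL$ and that the $(2,2)$-entry is also divisible by $p$; dividing through by $p$ gives
\[
\alpha,\ \gamma,\ \delta\;\in\;\Z_{p}, \qquad \beta\;\in\;p^{-1}\Z_{p}.
\]
This already proves (2) together with the one-sided bounds $\ord_{p}(\gamma)\geq 0$ and $\ord_{p}(\beta)\geq -1$.

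The finish is a valuation squeeze. Since $\alpha^{2}\in\Z_{p}$ while $1/p\notin\Z_{p}$, the right-hand side of $\beta\gamma=-\alpha^{2}-\tfrac{1}{p}$ has $p$-adic valuation exactly $-1$, so $\ord_{p}(\beta)+\ord_{p}(\gamma)=-1$. Combined with the one-sided bounds of the previous paragraph, equality must hold in both, which forces $\ord_{p}(\gamma)=0$ and $\ord_{p}(\beta)=-1$, i.e.\ (1) and (3).

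The step I expect to be the real obstacle is the middle one: the excerpt presents $D_{p}E$ only axiomatically, so the work is in identifying $D_{p}E$ with $H^{1}_{\mathrm{dR}}(E/\Q_{p})$, checking that $(\omega,x\omega)$ is genuinely integral for the chosen Weierstrass model, and matching Perrin-Riou's convention $\phi=p^{-1}F$ on the nose rather than only up to a unit. Once that identification is pinned down, everything else is a short linear algebra argument with valuations.
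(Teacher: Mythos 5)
The paper does not actually prove this proposition: it is quoted directly from the proof of Lemme 2.1 in \cite{perrin2003arithmetique}, so there is no internal argument to compare yours against. Your proof is, as far as I can check, correct and is essentially the standard crystalline argument that underlies Perrin-Riou's lemma: identify $D_pE$ with $H^1_{\mathrm{dR}}$ of an integral model so that $(\omega,\eta)$ spans a Frobenius-stable lattice $L$ with $\phi=p^{-1}F$, use $F(\Z_p\omega)\subseteq pL$ to get $\alpha,\gamma\in\Z_p$, get the $(2,2)$-entry divisible by $p$ so that $\beta\in p^{-1}\Z_p$, and then squeeze with $\beta\gamma=-\alpha^2-\tfrac1p$, whose valuation is exactly $-1$ because $\alpha\in\Z_p$. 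Two small simplifications: you do not need the conjugate-filtration statement at all, since the characteristic polynomial $X^2+\tfrac1p$ has no linear term, so $\delta=-\alpha\in\Z_p$ comes for free from the trace once $\alpha\in\Z_p$ is known; and if you do want to argue via nilpotence of $F\bmod p$, note that the case $F\equiv 0\bmod p$ is harmless (the image is then trivially inside the Hodge line) and is in any event excluded by $\det F=p\notin p^2\Z_p$. The one genuine prerequisite, which you correctly flag, is that $(\omega,\eta)=(\omega,x\omega)$ really is an integral basis of $H^1_{\mathrm{cris}}$ and that $\phi$ is normalized as $p^{-1}F$; for $p\geq 5$ and a $p$-minimal Weierstrass model this is standard, and the normalization is forced by the paper's stated characteristic polynomial. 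So the proposal is acceptable as a self-contained replacement for the citation, provided that identification is stated as an explicit hypothesis or reference.
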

 It turns out that $(\omega,\phi(\omega))$ is also a basis for $D_pE$. Under this basis, we have that $$\phi= \begin{bmatrix}
    0 & -\frac{1}{p} \\ 1 & 0
\end{bmatrix}_{(\omega,\phi(\omega))}.$$

Finally, simple calculations show that a change of basis gives \begin{equation}
    (A,B)_{(\omega,\eta)}=(A+\frac{\alpha}{\gamma}B, -\frac{1}{\gamma}B)_{(\omega,\phi(\omega))}
\end{equation}

\subsection{The supersingular case}
Now that we have defined the Dieudonne module $D_pE$, we will define for each $v\in D_pE$, a height $$h_v:E(\Q)\to \Q_p$$ which depends linearly on $v$. 
\begin{enumerate}
    \item For $v=\omega$ define \begin{equation}
    h_\omega(P)=\log_{\hat E}^2(P)
    \end{equation} for $P \in E^1(\Q_p)$, where $\log_{\hat E}$ is the elliptic $p$-adic logarithm. One can extend it linearly to all $P\in E(\Q)$. By the identification \begin{equation}
        E^1(\Q_p) \xrightarrow{\sim}p\Z_p
    \end{equation} that sends $P=(x,y)=(\frac{a}{d^2},\frac{b}{d^3})$ to $t=-\frac{x}{y}=-\frac{a}{b}d$, we get \begin{equation}
        h_\omega =t^2 +a_1 t^3 + O(t^4)
    \end{equation}

In particular we remark that \begin{equation}
    \ord_p(h_\omega(P))\geq 2
\end{equation}
and that \begin{equation} \label{height h_omega}
    \ord_p(h_\omega(P))>2 \iff P \in E^2(\Q_p) \iff p^2 |d
\end{equation}

If $P\in E(\Q)-E^1(\Q_p)$ and $p$ is supersingular, the above criterion remains exactly the same since $h_\omega(P)=\frac{1}{N_p}h_\omega(N_p P)$ and $p$ does not divide $N_p$.

\item For $v=\eta$, we define $h_\eta(P)$ in the exact same way as it was defined in the ordinary case (Equation \ref{pheight}).

\end{enumerate}

\subsection{Supersingular p-adic Regulator }
For elliptic curves of rank 1 with supersingular reduction at $p\geq 5$, the $p$-adic regulator is defined as \begin{equation*}
    \mathrm{Reg}_p=(h_\eta,-h_\omega)_{(\omega,\eta)}=(h_\eta+\frac{\alpha}{\gamma}h_\omega, \frac{1}{\gamma}h_\omega)_{(\omega, \phi(\omega))}
\end{equation*} 
as an element in the Dieudonne module $D_pE$.
For a more general definiton over any rank, see \cite{stein2013algorithms}.

A straightforward computation that we will need later gives us \begin{equation}\label{ss regp}
    (1 - \varphi)^2 \text{Reg}_p = \left( \frac{(p\gamma -\gamma) h_\eta + (p \alpha - \alpha + 2) h_\omega}{p \gamma}, \frac{-2 p \gamma h_\eta - (2 p \alpha - p + 1) h_\omega}{p \gamma} \right)_{(\omega, \varphi( \omega))}
\end{equation}

\section{p-adic L-series and the p-adic Birch and Swinnerton-Dyer Conjecture}
In this section, we give a brief overview of the $p-$adic version of the Birch and Swinnerton-Dyer conjecture. \\ 
We write $a_p$ for the trace of Frobenius. Then $x^2-a_px+p$ is the characteristic polynomial. For every root $\alpha$ of the polynomial which satisfies $\ord_p(\alpha)<1$, one can construct a $p$-adic L-series $L_p(E,\alpha,T)$. The construction involves modular symbols and can be found for example in \cite{stein2013algorithms}.

\subsection{The ordinary case}

If $p$ is an ordinary prime, there there is a unique root $\alpha$ with the desired propery and hence a canonical $p-$adic L-series, which we denote $L_p(E,T)$. In fact, it's easy to see that $\alpha \in \Z_p^\times$. This in turn implies that $L_p(E,T)\in \Z_p[[T]]$.

The $p-$adic version of the Birch and Swinnerton-Dyer conjecture in the ordinary case was proposed by Mazur, Tate and Teitelbaum in \cite{mazur-tate-teitelbaum}:
\begin{conjecture}
    Let $E$ be an elliptic curve with good ordinary reduction at a prime $p$.
    \begin{enumerate}
        \item The order of vanishing of $L_p(E,T)$ at $T=0$ is equal to the rank $r=\rank E(\Q)$
        \item Up to a $p-$adic unit, the leading term $L_p^*(E,0)$ satisfies 
        \begin{equation}
            L_p^*(E,0)\sim \frac{ \prod_v c_v \cdot N_p^2 \cdot \# \Sh(E/\Q)}{(\#E(\Q)[p^\infty])^2}\cdot \frac{\Reg_p(E/\Q)}{p}
        \end{equation}
    \end{enumerate}
\end{conjecture}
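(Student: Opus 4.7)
The plan is to deduce this conjecture from the cyclotomic Iwasawa Main Conjecture (IMC) for $E$ at $p$, combined with the algebraic-side leading-term formulas of Greenberg (rank $0$) and of Perrin-Riou--Schneider (rank $1$). Recall that the ordinary IMC asserts that $L_p(E,T)$ generates the characteristic ideal of the Pontryagin dual of $\Selp(E/\Q_\infty)$ over $\Z_p[[T]]$; equivalently, $L_p(E,T) \sim f_E(T)$ up to a unit. Granting this, part (1) reduces to showing $\ord_{T=0} f_E(T) = r$, and part (2) reduces to evaluating $g_E(0)$ where $f_E(T) = T^r g_E(T)$.

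The inequality $\ord_{T=0} f_E(T) \geq r$ is the first assertion of Theorem \ref{Theorem 2.4}. The reverse inequality, and thus equality, holds in rank $0$ by Greenberg's theorem (under finiteness of $\Sh$) and in rank $1$ by Theorem \ref{Theorem 2.4} itself under the three listed hypotheses. For the leading term, Greenberg's theorem in rank $0$ gives precisely the displayed formula (the factor $\Reg_p(E/\Q)/p^r$ collapsing to $1$), while the rank $1$ formula is exactly conclusion (2) of Theorem \ref{Theorem 2.4}. Pushing these through the unit of the IMC yields the stated formula for $L_p^*(E,0)$ up to a $p$-adic unit. The passage from $f_E(0)$, resp.\ $g_E(0)$, to $L_p^*(E,0)$ requires only the observation that the IMC unit does not affect the lowest nonzero Taylor coefficient.

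The main obstacle is of course the IMC itself. One divisibility is due to Kato via the Euler system of Beilinson--Kato elements; the reverse divisibility was established by Skinner--Urban via Eisenstein congruences on $\mathrm{GU}(2,2)$, both under hypotheses (irreducibility of the residual representation, the existence of an auxiliary prime with suitable local behavior, etc.) that must be verified or circumvented case by case. A secondary obstacle is rank $\geq 2$: there is no general analogue of Theorem \ref{Theorem 2.4} computing $g_E(0)$ in terms of $\Reg_p$ and arithmetic invariants for higher rank, so recovering the leading term of $L_p(E,T)$ in the general case would require both a higher-rank $p$-adic Gross--Zagier-type identity and a considerably stronger Mazur control theorem to match the analytic and algebraic orders of vanishing.
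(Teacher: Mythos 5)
The statement you were asked about is Conjecture~4.1 of the paper --- the Mazur--Tate--Teitelbaum $p$-adic Birch and Swinnerton-Dyer conjecture in the good ordinary case. The paper does not prove it and does not claim to: it is recorded as a conjecture, cited to \cite{mazur-tate-teitelbaum}, and used later only as an input. So there is no proof in the paper to compare yours against, and the first thing to say is that your text is not a proof either; it is an accurate description of the standard conditional pathway, and you are candid about where it breaks down. That honesty is to your credit, but the gaps you name are genuine and currently unclosable: (i) the Iwasawa Main Conjecture is known only under hypotheses (Kato's divisibility, Skinner--Urban's converse under residual irreducibility and ramification conditions), not in general; (ii) even granting the IMC, the algebraic leading-term formula of Theorem~\ref{Theorem 2.4} is itself conditional on the nonvanishing of $\Reg_p(E/\Q)$ and the finiteness of $\Sh(E/\Q)[p^\infty]$, both of which are open outside special cases (e.g.\ Bertrand's result for CM curves of rank one); and (iii) for rank $\geq 2$ there is, as you say, no unconditional analogue of the Perrin-Riou--Schneider formula, so part (2) of the conjecture cannot even be reduced to the IMC in that range. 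Your reduction of part (1) to $\ord_{T=0} f_E(T) = r$ via the IMC is correct as far as it goes, and the observation that the unit $u(T) \in \Lambda^\times$ does not disturb the leading coefficient is fine.

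One small inaccuracy worth flagging: you write that in rank $0$ the factor $\Reg_p(E/\Q)/p^r$ ``collapses to $1$'' and that Greenberg's theorem then gives ``precisely the displayed formula.'' The displayed formula in the conjecture has $\Reg_p(E/\Q)/p$, not $/p^r$, so taken literally it does not specialize to Greenberg's formula for $f_E(0)$ at $r=0$ (there would be a spurious factor of $p^{-1}$). This is a normalization issue in how the $p$-adic regulator and the leading coefficient are scaled (the paper's Theorem~\ref{Theorem 2.4} uses $p^r$, which agrees with the conjecture only when $r=1$, the case the paper actually needs), but if you are going to match the rank-$0$ case against Greenberg you should say explicitly which normalization you are using and check the powers of $p$ rather than asserting the match.
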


\subsection{The supersingular case}
In the supersingular case, for $p\geq 5$ the characteristic polynomial becomes $x^2+p$, with both roots $\alpha, \bar \alpha$ satisfying $\ord_p(\alpha)<1$. Furthermore, the series $L_p(E,\alpha,T)$ might not even have integral coefficients in $\Q_p(\alpha)$. However, we can still write $$L_p(E,\alpha,T)=G^+(T) +\alpha G^-(T)$$ with $G^+(T), G^-(T)\in \Q_p[[T]]$.

Perrin-Riou in \cite{perrin2003arithmetique} defined a $p-$adic L-series with coefficients in $D_p(E) \otimes \Q_p[[T]]$, by setting \begin{equation}\label{eq L_p supersingular}
    L_p(E,T)=(G^+(T), -pG^-(T))_{(\omega, \phi(\omega))}
\end{equation}

Then, the $p-$adic Birch and Swinnerton-Dyer Conjecture, according to \cite{bernardiperrinriou1993variante} is:

\begin{conjecture} \label{p-adic BSD supersingular} 
    Let $E$ be an elliptic curve over $\Q$ with supersingular reduction at a prime $p$.
    \begin{enumerate}
        \item The order of vanishing of $L_p(E,T)$ at $T=0$  is equal to the rank $r=\rank E(\Q)$.
        \item The leading term $L_p^*(E,0)$ satisfies 
        \begin{equation} \label{eq leading term supersingular }
            L_p^*(E,0)=\frac{\prod_v c_v \cdot \# \Sh(E\Q)}{(\# E(\Q)_{\text{tor}})^2} \cdot (1-\phi)^2 \frac{\Reg_p(E/\Q)}{p} \in D_p(E)  
        \end{equation}
    \end{enumerate}
\end{conjecture}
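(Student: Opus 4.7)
Since this statement is flagged as a conjecture rather than a theorem, what I can reasonably propose is a verification that it is equivalent to the already-stated Conjecture \ref{conjecture bsd ss}, combined with a comment about what an honest attack on the underlying arithmetic assertion would need. The plan is first to check that $(1-\phi)^2$ is an invertible $\Q_p$-linear endomorphism of $D_pE$. Using the basis $(\omega,\phi(\omega))$, in which $\phi$ has matrix $\left[\begin{smallmatrix} 0 & -1/p \\ 1 & 0 \end{smallmatrix}\right]$, the operator $1-\phi$ has matrix $\left[\begin{smallmatrix} 1 & 1/p \\ -1 & 1 \end{smallmatrix}\right]$, whose determinant $1+\frac{1}{p}$ is nonzero. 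Hence $(1-\phi)^2$ is invertible on $D_pE$.

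Multiplying both sides of the identity in Conjecture \ref{conjecture bsd ss} by this invertible operator, and specializing to the rank-one case so that $p^r=p$, recovers exactly the formula for $L_p^*(E,0)$ stated in part (2). For the order-of-vanishing claim in part (1), I would note that applying a fixed invertible $\Q_p$-linear automorphism to a power series with coefficients in $D_pE$ cannot change its order of vanishing at $T=0$, read off componentwise in any basis; hence part (1) of the present conjecture is equivalent to the corresponding assertion of Conjecture \ref{conjecture bsd ss}. As a sanity check one could pair this with the explicit computation \eqref{ss regp} of $(1-\phi)^2 \Reg_p$ and compare componentwise against the decomposition $L_p(E,T) = (G^+(T),-pG^-(T))_{(\omega,\phi(\omega))}$ in \eqref{eq L_p supersingular}, recovering formulas for the leading coefficients of $G^+$ and $G^-$ in terms of $h_\omega$ and $h_\eta$.

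The main obstacle is obviously not this reformulation but the underlying conjecture itself. A genuine proof would require the full Bernardi--Perrin-Riou interpolation property of $L_p(E,T)$ in terms of modular symbols, a supersingular Euler system (as provided by Kato) bounding one side of a supersingular Iwasawa main conjecture, and a further input matching leading terms analogous to the analytic class number formula. These are precisely the reasons the paper treats the statement as a hypothesis in Theorem \ref{supersingular theorem}, and I see no way around them at this level of the exposition.
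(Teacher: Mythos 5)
The statement is a conjecture (due to Bernardi--Perrin-Riou, cited from \cite{bernardiperrinriou1993variante}) for which the paper offers no proof, and your treatment is the right one: the check that $(1-\phi)^2$ is invertible (in the basis $(\omega,\phi(\omega))$ one has $\det(1-\phi)=1+\tfrac{1}{p}\neq 0$), the resulting equivalence with Conjecture \ref{conjecture bsd ss}, the observation that an invertible constant-coefficient automorphism of $D_pE$ preserves the order of vanishing at $T=0$, and your honest account of what a genuine proof would require all match how the paper uses the statement purely as a hypothesis in Theorem \ref{supersingular theorem}. The only point worth flagging is that the two printed versions differ by a factor of $p^{r-1}$ (Conjecture \ref{conjecture bsd ss} divides the regulator by $p^{r}$, this one by $p$), so they literally agree only in the rank-one case that the paper actually uses; your remark that ``$p^r=p$'' silently assumes $r=1$, and one of the two displays presumably carries a typo that should be reconciled.
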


Even though $G^+(T)$ and $G^-(T)$ do not have integral coefficients, Pollack, in \cite{pollack2001p}, had the following insight:\\
By using interpolation properties of $G^+(T)$ and $G^-(T)$ already explored by Perrin-Riou in \cite{perrin1990theorie}, one sees that $G^+(T)$ vanishes at $\zeta_{2n}-1$ and $G^-(T)$ vanishes at $\zeta_{2n-1}-1$, where $\zeta_n$ is a $p^n$-th root of unity.  In particular, they have infinitely many zeros. His insight was to construct two $p-$adic power series $\log_p^+(T)$ and $\log_p^-(T)$ that vanish precisely at those zeros. In particular, he defines 
\begin{equation} \label{logp_plus}
    \log_p^+(T)=\frac{1}{p} \prod_{n\geq 1} \frac{\Phi_{2n}(1+T)}{p}
\end{equation}
and \begin{equation} \label{logp_minus}
    \log_p^-(T)=\frac{1}{p}\prod_{n\geq 1} \frac{\Phi_{2n+1}(1+T)}{p}
\end{equation}
where $\Phi_n$ is the $p^n$-th cyclotomic polynomial.

He then defines 
\begin{equation}\label{eq L_p plus}
    L_p^+(E,T):= \frac{G^+(T)}{\log_p^+(E,T)}\qquad \text{and} \qquad L_p^-(E,T):= \frac{G^-(T)}{\log_p^-(E,T)}
\end{equation}
 He proves that these power series have indeed bounded coefficients in $\Z_p$ and hence finitely many zeros. The signed Iwasawa invariants $\mu_p^\pm$ and $\lambda_p^\pm$ refer to the Iwasawa invariants of these two power series.

\section{Main Conjecture}
The main conjecture relates the analytic side, i.e. the $p$-adic $L$-functions, with the algebraic side that comes from the study of Selmer groups. The ordinary and supersingular case have quite a different nature, so we will treat them separately. 
\subsection{The ordinary case}
When $p$ is an ordinary prime, the Pontryagin dual of the Selmer group $\Selp(E/\Q_\infty)$ is a finitely generated, torsion $\Z_p$-module. Therefore, associated to it is a characteristic series $$f_E(T)\in \Z_p[[T]]=\Lambda$$
Schneider (\cite{schneider1985p}) and Perrin-Riou (\cite{perrin1982descente}) showed the following:
\begin{theorem}[Perrin-Riou, Schneider]
    Let $E$ be an elliptic curve over $\Q$ with good ordinary reduction at a prime $p$ and rank $r$. Then $$ord_{T=0}f_E(T) \geq r$$
    It is equal to $r$ if and only if \begin{enumerate}
        \item $\Reg_p(E/\Q) \neq 0$
        \item $\Sh(E/\Q)(p)$ is finite
    \end{enumerate}
    In that case, if we write $f_E(T)=T^r g(T)$, then, up to a $p-$adic unit,
     \begin{equation}
        g_E(0)\sim \frac{\prod_v c_v \cdot N_p^2 \cdot \# \Sh(E/\Q)(p)}{(\#E(\Q)(p))^2} \cdot \frac{Reg_p(E\Q)}{p^r}
    \end{equation}
\end{theorem}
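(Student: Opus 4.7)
The plan is to express $g_E(0)$ as a truncated Euler characteristic of $M := \Selp(E/\Q_\infty)^\vee$ and then to evaluate that Euler characteristic by combining Mazur's control theorem with the non-degeneracy of the $p$-adic height pairing. Since $M$ is torsion over $\Lambda := \Z_p[[T]]$ with characteristic series $f_E(T)$, the $\Z_p$-corank of $M/TM$ equals $\ord_{T=0}f_E(T)$, and the inequality $\ord_{T=0}f_E(T)\geq r$ will follow once we locate enough $\Gamma$-coinvariants coming from rational points.

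First I would invoke Mazur's control theorem: using good ordinary reduction at $p$, the restriction map $\Selp(E/\Q)\to\Selp(E/\Q_\infty)^\Gamma$ has finite kernel and cokernel. Combined with the tautological sequence
\[
0 \to E(\Q)\otimes\Q_p/\Z_p \to \Selp(E/\Q) \to \Sh(E/\Q)[p^\infty] \to 0,
\]
this forces $\ord_{T=0}f_E(T)\geq r$. Equality holds iff (a) $\Sh(E/\Q)(p)$ is finite, so that $\Selp(E/\Q)$ has $\Z_p$-corank exactly $r$, and (b) $\Phi_M:M^\Gamma\to M_\Gamma$ has finite kernel and cokernel, which rules out any extra factor of $T$ beyond the rank contribution. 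Condition (b) is equivalent to non-degeneracy of the cyclotomic $p$-adic height pairing on $E(\Q)\otimes\Q_p$, i.e.\ to $\Reg_p(E/\Q)\neq 0$, as indicated in the discussion after Theorem~\ref{Theorem 2.4}.

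For the leading-term formula, the structure theorem for torsion $\Lambda$-modules gives
\[
g_E(0)\sim \chi_T(\Gamma,M) = \frac{\#\ker\Phi_M}{\#\coker\Phi_M}
\]
up to a $p$-adic unit, once we are in the equality case. To evaluate $\chi_T(\Gamma,M)$ I would assemble a snake-lemma diagram whose rows are the defining sequences for $\Selp(E/\Q)$ and $\Selp(E/\Q_\infty)^\Gamma$, with connecting map essentially $\Phi_M$ composed with the Mazur control map. Poitou-Tate duality at bad primes contributes $\prod_v c_v$ from the local cokernels; the local computation at $p$ contributes $N_p^2$, exactly as in Greenberg's rank-zero formula; the torsion of $E(\Q)$ appears squared in the denominator from the two rows; and $\#\Sh(E/\Q)(p)$ appears directly from the tautological sequence.

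The genuinely new ingredient beyond the rank-zero case, and the main obstacle, is the regulator factor. It enters because the $p$-adic height pairing on $E(\Q)\otimes\Q_p/\Z_p$, whose discriminant in a basis of $E(\Q)/\mathrm{tors}$ is $\Reg_p(E/\Q)$, is precisely what measures the failure of $\Phi_M$ to be an isomorphism on the free part. The denominator $p^r$ arises from the normalization of the $p$-adic height. Keeping careful track of these $p$-powers and unit ambiguities is the delicate step; the cleanest presentation is due to Schneider via the universal norms construction, which turns the snake-lemma bookkeeping into a clean pairing-theoretic identity.
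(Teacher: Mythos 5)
The paper does not prove this theorem: it is quoted verbatim from Schneider \cite{schneider1985p} and Perrin-Riou \cite{perrin1982descente}, so there is no internal proof to compare your sketch against. Your outline does follow the standard architecture of those references --- control theorem to get $\ord_{T=0}f_E(T)\geq r$, reduction of $g_E(0)$ to the truncated Euler characteristic $\chi_T(\Gamma,M)$, and a descent diagram producing the Tamagawa, $N_p^2$, torsion and $\Sh$ factors --- and the first part (the inequality) is essentially complete as written.

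As a proof of the full statement, however, there is a genuine gap: the two assertions you lean on, namely that finiteness of $\ker\Phi_M$ and $\coker\Phi_M$ is \emph{equivalent} to non-degeneracy of the cyclotomic $p$-adic height pairing, and that the discriminant of that pairing enters the Euler characteristic precisely as $\Reg_p(E/\Q)/p^r$, are not bookkeeping steps --- they \emph{are} the theorem of Schneider and Perrin-Riou. Invoking them as known inputs makes the argument circular. The delicate point you would actually have to prove is the identification of the map induced by $\Phi_M$ on the $\Z_p$-free quotient of $M_\Gamma$ with the height pairing (via universal norms or Perrin-Riou's descent), which is where both the equivalence with $\Reg_p\neq 0$ and the factor $p^{-r}$ come from. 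Two smaller issues: the identity $g_E(0)\sim\chi_T(\Gamma,M)$ is only valid once $\Phi_M$ has finite kernel and cokernel, which for $r\geq 2$ is not automatic from $\Sh(E/\Q)(p)$ being finite (cf.\ the paper's remark that this is guaranteed only when $T^2\nmid f_E(T)$), so the logical order of your equivalences needs care; and the height pairing whose discriminant is $\Reg_p$ lives on $E(\Q)\otimes\Q_p$ (relative to a basis of $E(\Q)/\mathrm{tors}$), not on $E(\Q)\otimes\Q_p/\Z_p$.
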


The main conjecture simply states that $L_p(E,T)=f_E(T)\cdot u(T)$, where $u(T)\in \Lambda^\times$
\\ An important property that holds in the ordinary case is Mazur's control theorem:
\begin{theorem}[Mazur's Control Theorem]
    Let $\Gamma=\Gal(\Q_\infty/\Q)$ be the galois group of the cyclotomic $\Z_p$ extension and $E/\Q$ be an elliptic curve with ordinary reduction at $p$. Then $$\Selp(E/\Q) \to \Selp(E/\Q_\infty)^\Gamma$$ has finite kernel and cokernel.
\end{theorem}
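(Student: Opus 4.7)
The plan is to prove this using the standard Galois-cohomological comparison argument that is originally due to Mazur. The starting point is to write the Selmer group as the kernel of a global-to-local restriction map and compare the finite and infinite levels via the snake lemma.

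First I would set up the fundamental commutative diagram
\begin{equation*}
\begin{tikzcd}[column sep=small]
0 \arrow[r] & \Selp(E/\Q) \arrow[r] \arrow[d, "s"] & H^1(\Q, E[p^\infty]) \arrow[r] \arrow[d, "h"] & \displaystyle\prod_{v} \frac{H^1(\Q_v, E[p^\infty])}{\mathcal{H}_v(E/\Q)} \arrow[d, "g=\prod g_v"] \\
0 \arrow[r] & \Selp(E/\Q_\infty)^\Gamma \arrow[r] & H^1(\Q_\infty, E[p^\infty])^\Gamma \arrow[r] & \left(\displaystyle\prod_{w} \frac{H^1(\Q_{\infty,w}, E[p^\infty])}{\mathcal{H}_w(E/\Q_\infty)}\right)^\Gamma
\end{tikzcd}
\end{equation*}
where $\mathcal{H}_v$ denotes the image of the local Kummer map. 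The snake lemma then reduces the claim to proving that $\ker h$ is finite and $\ker g_v$ is finite for every $v$, with only finitely many $v$ contributing nontrivially.

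Next, I would handle the middle map $h$ by inflation-restriction: $\ker h = H^1(\Gamma, E(\Q_\infty)[p^\infty])$, which is finite because $\Gamma \cong \Z_p$ acts on the cofinitely generated $\Z_p$-module $E(\Q_\infty)[p^\infty]$, whose $\Gamma$-invariants are contained in $E(\Q_\infty)_{\mathrm{tors}}$ (finite by a theorem of Ribet). The local maps $g_v$ are then analyzed prime by prime: for $v \nmid p$ and $v$ of good reduction, the local condition is the unramified cohomology, and the kernel of $g_v$ reduces to computing $H^1$ of $\Gal(\Q_{\infty,w}/\Q_v)$ acting on $E(\Q_{\infty,w}^{\mathrm{unr}})[p^\infty]$, which is finite (and zero for almost all $v$). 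For bad primes away from $p$ there are only finitely many to consider, and each contributes a finite kernel by a direct inflation-restriction computation using the finiteness of local Tate-Shafarevich terms.

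The hard step — and the one that uses the ordinary reduction hypothesis decisively — is the analysis at $v = p$. Here I would exploit the exact sequence
\begin{equation*}
0 \to \widehat{E}[p^\infty] \to E[p^\infty] \to \widetilde{E}[p^\infty] \to 0
\end{equation*}
of $G_{\Q_p}$-modules coming from ordinary reduction, where $\widehat{E}$ is the formal group and $\widetilde{E}$ the reduction. The Kummer condition at $p$ can be reidentified (following Coates–Greenberg) with the image of $H^1(\Q_p, \widehat{E}[p^\infty]) \to H^1(\Q_p, E[p^\infty])$, which makes $\ker g_p$ computable: it is controlled by $\widehat{E}(\Q_{p,\infty})[p^\infty]$ modulo its $\Gamma$-invariants, a finite group since the formal group has no $p$-divisible submodule of rank bigger than one over the unramified extension. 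Putting all the local contributions together, $\ker g$ is a finite product of finite groups. The snake lemma then yields finiteness of both $\ker s$ and $\coker s$.

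The main obstacle is the analysis at $p$: one must really use the ordinary hypothesis to split off the formal-group part of $E[p^\infty]$ and verify that the restriction map on the formal group side has finite kernel and cokernel. All the other primes are handled essentially by unramified cohomology arguments, and the archimedean place disappears because $p$ is odd.
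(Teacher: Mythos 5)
The paper does not actually prove this statement: it is quoted as Mazur's Control Theorem and used as a black box (the standard references being Mazur's original work and Greenberg's exposition), so there is no in-paper argument to compare against. Your proposal is the standard Greenberg-style proof and its skeleton is correct: the fundamental diagram, finiteness of $\ker h = H^1(\Gamma, E(\Q_\infty)[p^\infty])$ (which indeed follows from finiteness of the $\Gamma$-invariants $E(\Q)[p^\infty]$ by a corank count, so Ribet/Imai is not even strictly needed), the unramified-cohomology analysis away from $p$, and the decisive use of ordinarity at $p$ via the Coates--Greenberg description of the local condition. Two points deserve more care. First, to control $\coker s$ and not just $\ker s$ you should replace the third column of the top row by the image of the localization map so that both rows are short exact; the snake lemma then needs $\coker h = H^2(\Gamma, E(\Q_\infty)[p^\infty])$, which vanishes because $\Gamma \cong \Z_p$ has $p$-cohomological dimension one. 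Second, your stated reason for finiteness at $p$ is off: the cyclotomic $\Z_p$-extension is \emph{totally ramified} at $p$, not unramified, and the finiteness of $\ker g_p$ in Greenberg's argument comes from the finiteness of $\wt E(\F_p)[p^\infty]$ (the residue field of $\Q_{\infty,\eta}$ is still $\F_p$) together with a separate lemma bounding $H^1(\Gamma_\eta, \wh E(\mathfrak m_\eta))$, rather than from the formal group lacking large $p$-divisible submodules over the unramified extension. This local kernel at $p$ can be nontrivial exactly at anomalous primes, which is consistent with the factor $N_p^2$ appearing in the Euler characteristic formulas the paper relies on.
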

In addition, it is well known that $$\ord_{T=0}f_E(T)=\mathrm{corank}_{\Z_p}(\Selp(E/\Q_\infty)^\Gamma)$$
An immediate application of this is: 
\begin{corollary}
    Assume that $\#\Sh(E/\Q)(p)$ is finite. Let $\lambda_p^{MW}$ be the maximum rank of the elliptic curve over the tower of number fields in the cyclotomic $\Z_p$-extension. Then $$\lambda_p^{MW}\leq \lambda_p(E)$$
\end{corollary}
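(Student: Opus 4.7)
The plan is to pass up the cyclotomic tower $\Q = K_0 \subset K_1 \subset \cdots \subset \Q_\infty$ and establish the inequality $\rank E(K_n) \leq \lambda_p(E)$ at every finite level $n$; taking the supremum over $n$ then yields $\lambda_p^{MW} \leq \lambda_p(E)$. At each level, I would chain three steps.

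\emph{Step 1 (descent):} The Kummer-theoretic exact sequence
$$0 \to E(K_n) \otimes \Q_p/\Z_p \to \Selp(E/K_n) \to \Sh(E/K_n)(p) \to 0$$
gives $\rank E(K_n) \leq \mathrm{corank}_{\Z_p} \Selp(E/K_n)$ irrespective of finiteness of $\Sh(E/K_n)(p)$. \emph{Step 2 (control):} Mazur's control theorem, applied to the cyclotomic $\Z_p$-extension $\Q_\infty/K_n$, produces a map $\Selp(E/K_n) \to M^{\Gamma_n}$ with finite kernel and cokernel, where $M = \Selp(E/\Q_\infty)$ and $\Gamma_n = \Gal(\Q_\infty/K_n)$; in particular the two $\Z_p$-coranks agree. \emph{Step 3 (bound):} $\mathrm{corank}_{\Z_p}(M^{\Gamma_n}) \leq \lambda_p(E)$, which by Pontryagin duality reduces to $\rank_{\Z_p}(X_{\Gamma_n}) \leq \lambda_p(E)$, where $X$ denotes the dual of $M$.

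For Step 3 I would invoke the structure theorem: $X$ is pseudo-isomorphic to $\bigoplus_i \Lambda/(p^{\mu_i}) \oplus \bigoplus_j \Lambda/(g_j)$ with the $g_j$ distinguished polynomials satisfying $\sum_j \deg g_j = \lambda_p(E)$. Writing $\omega_n = (1+T)^{p^n} - 1$, each factor $\Lambda/(p^{\mu_i}, \omega_n)$ is $p^{\mu_i}$-torsion and so has $\Z_p$-rank $0$, while each $\Lambda/(g_j, \omega_n)$ is a quotient of the free $\Z_p$-module $\Lambda/(g_j) \cong \Z_p^{\deg g_j}$ and hence has $\Z_p$-rank at most $\deg g_j$. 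Summing yields the bound, uniformly in $n$.

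The main obstacle is tracking the pseudo-isomorphism through the passage to coinvariants: it has finite but generally non-trivial kernel and cokernel, and tensoring with $\Lambda/\omega_n$ is not exact. A short $\Tor$/snake-lemma argument, using that $\Tor_1^\Lambda(N, \Lambda/\omega_n)$ is finite whenever $N$ is finite, shows that $\rank_{\Z_p} X_{\Gamma_n}$ matches the corresponding rank for the elementary module, so the bound $\leq \lambda_p(E)$ transfers. Combined with Steps 1 and 2, this delivers $\rank E(K_n) \leq \lambda_p(E)$ at every level, and the supremum gives the result.
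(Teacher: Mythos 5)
Your proof is correct and is precisely the standard argument the paper has in mind: the paper itself offers no written proof, calling the corollary an ``immediate application'' of Mazur's control theorem together with the identification of $\mathrm{corank}_{\Z_p}$ of the $\Gamma_n$-invariants via the structure theory of $\Lambda$-modules, which is exactly your chain of Steps 1--3. (As you note, the finiteness hypothesis on $\Sh(E/\Q)(p)$ is not actually needed for the inequality, since the Kummer sequence already gives $\rank E(K_n)\leq \mathrm{corank}_{\Z_p}\Selp(E/K_n)$ unconditionally.)
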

In particular, this means that if $\lambda_p(E)=\rank(E(\Q))$, then the rank of the elliptic curve remains stable in the entire cyclotomic $\Z_p$-extension.

\subsection{The supersingular case}
The supersingular case is a lot more complicated. The Pontryagin dual of the Selmer group is no longer a  torsion $\Lambda$-module. In addition, Mazur's Control theorem is no longer true. 
\\ To overcome these obstacles, one approach is given by Perrin-Riou in \cite{perrin1994theorie}. A more modern approach is given by Kobayashi (\cite{kobayashi2003iwasawa}) where he constructs signed Selmer groups $\Selp^\pm(E/\Q_\infty)$ which are cotorsion $\Lambda$-modules. We will denote $\lambda_p^{\mathrm{alg}}, \mu_p^{\mathrm{alg}}$ the Iwasawa invariants associated to them and  $\lambda_p^{\mathrm{an}}, \mu_p^{\mathrm{an}}$ the ones defined by Pollack's $p-$adic L-functions. We will also denote with $f_E^\pm(T)$ the characteristic series that is associated to them. The main conjecture simply states then that $f_E^\pm(T)$ is generated by Pollack's signed $p$-adic L-functions:
\begin{conjecture}[Main Conjecture]
    $$f_E^\pm(T)=(L_p^\pm(E,T))$$ 
\end{conjecture}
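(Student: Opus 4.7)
The plan is the standard two-step paradigm for Iwasawa main conjectures: obtain one divisibility from Kato's Euler system (suitably packaged for supersingular $p$ via Kobayashi's $\pm$-theory), obtain the reverse divisibility from Eisenstein congruences on an automorphic eigenvariety, and then rule out any residual $\mu$-discrepancy by comparing leading terms at $T=0$ against the arithmetic formula predicted by Conjecture \ref{conjecture bsd ss}.

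First I would invoke Kato's zeta element $\mathbf z_{\mathrm{Kato}} \in H^{1}_{\Iw}(\Q_\infty, T_pE)$. Because $p$ is supersingular, the local cohomology at $p$ has no Greenberg ordinary filtration, so one cannot project directly to a canonical quotient. Following Kobayashi, I would use Honda's formal-group-theoretic construction to cut out $\pm$-submodules of $H^{1}(\Q_{p,\infty}, T_pE)$ that are the orthogonal complements, under local Tate duality, of the signed Selmer conditions. The explicit reciprocity law (Kato's big exponential, refined by Kobayashi through Perrin-Riou's $\mathcal L$-map) then matches the images of $\mathbf z_{\mathrm{Kato}}$ under these $\pm$-pairings with Pollack's series $L_p^\pm(E,T)$ up to units. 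A Poitou-Tate long exact sequence applied to the $\pm$-Selmer structure extracts from this the first divisibility
$$(L_p^\pm(E,T)) \;\subseteq\; (f_E^\pm(T)).$$

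For the reverse direction I would deform $E$ along a Hida/Coleman family on a unitary eigenvariety and construct an Eisenstein series on $\mathrm{U}(2,2)$ (or a Siegel-parahoric Hilbert variant) whose constant term encodes Perrin-Riou's $D_pE$-valued $L$-function; an Eisenstein-cuspidal congruence modulo a specified ideal then yields, via the resulting Galois lattice, a lower bound on the characteristic ideal of the dual of $\Selp^\pm(E/\Q_\infty)$. This is essentially the argument of X.\ Wan adapted to the present $\pm$-formulation. Passage from the $D_pE$-valued statement back to the $\pm$-formulation is achieved by inverting the identities \eqref{eq L_p supersingular}, \eqref{logp_plus}, \eqref{logp_minus} and \eqref{eq L_p plus}, and requires the standard hypotheses that $\bar\rho_{E,p}$ is absolutely irreducible and that Wan's compatible auxiliary anticyclotomic data are available.

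The final step is a leading-term check: equation \eqref{ss regp} together with Conjecture \ref{conjecture bsd ss} forces both $g_E^\pm(0)$ and $L_p^{\pm,\ast}(E,0)$ to be the same product of $\mathrm{Reg}_p$, $\prod_v c_v$, $\#\Sh(E/\Q)$ and $\#E(\Q)_{tors}$ up to a $p$-adic unit, which upgrades the two divisibilities to an equality of ideals and pins down the $\mu$-invariants. The main obstacle is unquestionably the Eisenstein-congruence step: Wan's construction imposes genuinely restrictive hypotheses — residual irreducibility, ramification conditions at auxiliary primes, the existence of compatible anticyclotomic data — that cannot be removed with present techniques, and even granting both divisibilities, $\mu^\pm=0$ requires additional input such as a suitable analog of Greenberg's conjecture or a direct leading-term analysis beyond what the Euler system and Eisenstein-congruence arguments provide in isolation.
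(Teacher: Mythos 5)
The statement you are trying to prove is presented in the paper as a \emph{conjecture}, not a theorem: the paper offers no proof of $f_E^\pm(T)=(L_p^\pm(E,T))$ and only records the known partial results, namely that Pollack and Rubin \cite{pollackrubin2004main} establish it for CM curves and that Kobayashi \cite{kobayashi2003iwasawa}, via Kato's Euler system, proves the single inclusion $(L_p^\pm(E,T))\subseteq f_E^\pm(T)$ for non-CM curves and almost all supersingular $p$. So there is no proof in the paper to compare yours against, and a correct response to this exercise is to recognize that the statement is open in general.

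As a roadmap your sketch follows the expected literature, but it does not constitute a proof, for concrete reasons. The Euler-system divisibility you invoke is only known under hypotheses (almost all $p$, suitable image of Galois), which you do not verify. The reverse divisibility via Eisenstein congruences on $\mathrm{U}(2,2)$ is precisely the part that remains conditional and restrictive, as you yourself concede; citing it does not discharge it. Finally, the ``leading-term check'' is both circular and insufficient: it appeals to Conjecture \ref{conjecture bsd ss}, which is itself unproven (the paper only ever \emph{assumes} it, e.g.\ in Theorem \ref{supersingular theorem}), and a comparison of leading terms at the single point $T=0$ cannot recover a characteristic ideal in $\Lambda$ --- if both ideal-theoretic divisibilities genuinely held integrally there would be nothing left to check, and if (as is actually the case for the Eisenstein-congruence input) one divisibility is only known after inverting $p$, then evaluating at $T=0$ determines the product $p^{\mu}g(0)$ but cannot separate a $\mu$-discrepancy from a unit discrepancy, especially since in the rank-one setting both sides vanish at $T=0$ and one must compare derivatives through the same conjectural formula. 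The honest conclusion is that the statement cannot currently be proved; what can be said unconditionally is the one-sided inclusion due to Kobayashi and the CM case due to Pollack--Rubin.
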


This is proven for CM elliptic curves by Pollack and Rubin in \cite{pollackrubin2004main}. By using Kato's Euler system, Kobayashi proves, that for non-CM elliptic curves and almost all primes $p$:
$$(L_p^\pm(E,T)) \subseteq f_E^{\pm}(T)$$
Finally, there's a version of Mazur's Control theorem in the supersingular case:

\begin{theorem}[Control Theorem]
    The natural map $$\Selp(E/\Q)\to \Selp^{\pm}(E/\Q_\infty)^\Gamma$$ has finite kernel and cokernel.
\end{theorem}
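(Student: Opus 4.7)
The plan is to run Kobayashi's version of Mazur's control-theorem argument, with the classical Kummer condition at $p$ replaced by the plus/minus local condition $\hat E^\pm(\Q_{\infty,p})$. Concretely, for $F=\Q$ or $F=\Q_\infty$, both Selmer groups fit in an exact sequence
\[
0 \longrightarrow \Selp^\pm(E/F) \longrightarrow H^1(\Q_S/F,E[p^\infty]) \longrightarrow \bigoplus_{v\in S}\mathcal J_v^\pm(F),
\]
where $S$ is a finite set containing $p$, the archimedean prime, and the primes of bad reduction, and $\mathcal J_v^\pm(F)=H^1(F_v,E[p^\infty])/L_v^\pm(F)$ is the quotient by the chosen local condition (classical Kummer image for $v\neq p$). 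I would arrange the two sequences, one over $\Q$ and the $\Gamma$-invariants of the one over $\Q_\infty$, into a commutative diagram with vertical restriction maps, and then apply the snake lemma.

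The snake lemma reduces the problem to controlling the kernel and cokernel of the global map $H^1(\Q_S/\Q,E[p^\infty])\to H^1(\Q_S/\Q_\infty,E[p^\infty])^\Gamma$ and of each local comparison map $\mathcal J_v(\Q)\to \mathcal J_v^\pm(\Q_\infty)^\Gamma$. Inflation-restriction identifies the global kernel with $H^1(\Gamma,E(\Q_\infty)[p^\infty])$, which is finite because $E(\Q_\infty)[p^\infty]$ is; the corresponding cokernel lies in $H^2(\Gamma,E(\Q_\infty)[p^\infty])=0$ since $\Gamma\cong\Z_p$ has cohomological dimension one. For each $v\in S$ with $v\neq p$, standard facts make both the local kernel and cokernel finite: the tower $\Q_{\infty,v}/\Q_v$ is a $\Z_p$-extension on which inertia acts through a pro-$p$ quotient, and Tate local duality forces the relevant unramified or Kummer quotients to be cofinitely generated of rank zero.

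The real content, and the main obstacle, is the local term at $v=p$: one must verify that the natural map
\[
\frac{H^1(\Q_p,E[p^\infty])}{E(\Q_p)\otimes\Q_p/\Z_p} \longrightarrow \left(\frac{H^1(\Q_{\infty,p},E[p^\infty])}{\hat E^\pm(\Q_{\infty,p})\otimes\Q_p/\Z_p}\right)^{\!\Gamma}
\]
has finite kernel and cokernel. In the ordinary case Mazur's proof uses the connected-\'etale filtration of $E[p^\infty]$, which is unavailable when $E[p]$ is an irreducible $G_{\Q_p}$-module. Kobayashi's definition of $\hat E^\pm$ via trace-compatibility along the cyclotomic $\Z_p$-tower is engineered so that $\hat E^\pm(\Q_p)\otimes\Q_p/\Z_p$ is precisely the $\Gamma$-invariant submodule that makes this comparison work; verifying finiteness reduces, after matching $\hat E^\pm$ with the zero-loci of the Pollack logarithms $\log_p^\pm$, to an explicit calculation with Coleman-type power series on the formal group. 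This is where all of the supersingular-specific input enters and is the step I expect to require the most care.
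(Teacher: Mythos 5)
The paper itself does not prove this statement: it is quoted as a known result and the reader is referred to Ray--Sujatha (Proposition 5.1), which in turn rests on Kobayashi's original control theorem for the signed Selmer groups. So there is no in-paper argument to compare yours against; the relevant comparison is with the standard proof in the cited literature, whose skeleton you have correctly reproduced: the two defining exact sequences, the snake lemma, inflation--restriction for the global term (with $H^2(\Gamma,-)=0$ since $\Gamma\cong\Z_p$ has cohomological dimension one; note that in the supersingular case $E(\Q_{\infty,p})[p^\infty]=0$, so the global kernel actually vanishes), and the routine finiteness of the local terms away from $p$.

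The genuine gap is exactly where you flag it, at the local term at $p$, and your proposed route through it is not the right one. Matching $\hat E^\pm$ with the zero loci of Pollack's $\log_p^\pm$ is the \emph{analytic} side of the story (the interpolation property of the signed $p$-adic $L$-functions); it plays no role in the local control statement, which is purely about Galois cohomology of the formal group along the tower. What is actually needed is Kobayashi's structural analysis of the norm subgroups: that $\hat E(\mathfrak m_n)=\hat E^+(\mathfrak m_n)+\hat E^-(\mathfrak m_n)$ with finite (in fact trivial, for $p\geq 5$, $a_p=0$) intersection at level $0$, that $E^\pm(\Q_{n,p})\otimes\Q_p/\Z_p$ is the $\Gamma_n$-invariant part of $E^\pm(\Q_{\infty,p})\otimes\Q_p/\Z_p$, and the determination of $\widehat{E^\pm(\Q_{\infty,p})\otimes\Q_p/\Z_p}$ as a $\Lambda$-module via the trace relations $\mathrm{Tr}_{n/n-1}$ on $\hat E$ coming from $a_p=0$. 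Without some version of these facts, the snake-lemma reduction does not close, because one cannot identify $E(\Q_p)\otimes\Q_p/\Z_p$ with the $\Gamma$-invariants of the plus/minus local condition up to finite index. As written, your argument establishes the reduction but not the theorem; to complete it you should either import Kobayashi's Propositions on the $\pm$ norm subgroups explicitly or cite them, which is in effect what the paper does by deferring to the literature.
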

For a proof, see for example \cite{raysujatha2023euler}, Proposition 5.1.

As in the ordinary case, this has the consequence that 
$$\rank(E(\Q_n))\leq \lambda_p^\pm$$
for all $n\geq 1$. Therefore, if either $\lambda_p^+$ or $\lambda_p^-$ is equal to $\rank(E(\Q))$, then the rank of the elliptic curve remains stable in the entire cyclotomic $\Z_p$-extension.

We should clarify at this point that the Iwasawa invariants that are derived from the (signed) Selmer groups are a priori different from those that are derived from the (signed) $p-$adic $L-$series. We call the former algebraic Iwasawa invariants and the latter analytic. Under the main conjecture, they are equal to each other.

\section{Main Results}
We are ready to prove Theorems \ref{ordinary theorem} and \ref{supersingular theorem}. After setting up some notations, we will start with the ordinary case. 
\subsection{Fermat quotient}
To simplify notation, we introduce the following definition:
\begin{definition}
Let $a\in \Z$ and a prime $p$  coprime to $a$. Define $$q_p(a):= \frac{a^{p-1}-1}{p} \mod p$$ the \textit{Fermat quotient}. 
\end{definition}

We list some obvious but useful properties: 
\begin{lemma}\label{lemma 4.1}

\begin{enumerate}
    \item $q_p(1)=0$
    \item $q_p(-a)=q_p(a)$
\end{enumerate}
For $a,b$ coprime to $p$:
\begin{enumerate}[label=(\roman*)]
    \item $q_p(ab)=q_p(a)+q_p(b)$
    \item $q_p(\frac{a}{b})=q_p(a)-q_p(b)$
    \item $q_p(a+np^2)=q_p(a)$.
    \item $q_p(a+ np)=q_p(a) - \frac{n}{a}  $
\end{enumerate}
\end{lemma}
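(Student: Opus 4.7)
The plan is to prove each item by a straightforward application of Fermat's little theorem and the binomial expansion, working modulo suitable powers of $p$. Throughout I would use the key identity $a^{p-1} = 1 + p\, q_p(a) + p^2 (\text{integer})$, valid whenever $\gcd(a,p) = 1$ (here I treat $q_p(a)$ as an integer lift before reducing mod $p$). Items (1) and (2) are immediate: for (1), $1^{p-1}-1 = 0$, and for (2), since we may assume $p$ odd, $(-a)^{p-1} = a^{p-1}$.

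For (i), I would multiply the defining identities for $a$ and $b$:
$$(ab)^{p-1} = a^{p-1} b^{p-1} = (1 + p\, q_p(a))(1 + p\, q_p(b)) = 1 + p\bigl(q_p(a)+q_p(b)\bigr) + p^2 q_p(a) q_p(b),$$
so dividing by $p$ and reducing mod $p$ gives $q_p(ab) \equiv q_p(a) + q_p(b) \pmod p$. Item (ii) then follows by writing $a = (a/b)\cdot b$ and applying (i).

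For (iii) and (iv), I would expand $(a+np^k)^{p-1}$ by the binomial theorem and keep terms modulo $p^2$. The key observation is that for $k=2$ all contributing terms beyond $a^{p-1}$ carry a factor of $p^2$, giving (iii). For $k=1$ only the linear binomial term survives modulo $p^2$, yielding
$$(a+np)^{p-1} \equiv a^{p-1} + (p-1) a^{p-2} \cdot np \equiv a^{p-1} - n a^{p-2} p \pmod{p^2}.$$
Subtracting $1$, dividing by $p$, and using $a^{p-2} \equiv a^{-1} \pmod p$ (again Fermat) yields $q_p(a+np) \equiv q_p(a) - n/a \pmod p$, which is (iv).

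There is no real obstacle here; the only care needed is to track which power of $p$ one is working modulo before dividing by $p$, and to note that (2) implicitly uses $p$ odd (which is in force throughout the paper).
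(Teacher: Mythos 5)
Your proof is correct. The paper itself offers no proof of this lemma --- it simply labels the properties ``obvious'' --- so there is nothing to compare against; your argument (the identity $a^{p-1}=1+p\,q_p(a)+p^2(\text{integer})$ combined with the binomial expansion, plus $a^{p-2}\equiv a^{-1} \pmod p$ for item (iv)) is the standard verification and handles the one subtle point, namely working modulo $p^2$ before dividing by $p$. Your remark that (2) needs $p$ odd is also right and consistent with the paper's running hypotheses; the only thing worth making explicit is that (ii) tacitly extends the definition of $q_p$ to rationals that are $p$-adic units, which your reduction $a=(a/b)\cdot b$ already justifies.
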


If an elliptic curve has Weierstrass model $E:y^2+a_1xy+a_3y=x^3+a_2x^2+a_4x+a_6 $ and $Q=(\frac{a}{d^2},\frac{b}{d^3})\in E^1(\Q_p)$, then, plugging in the coordinates of the point in the equation of the elliptic curve and clearing denominators, we get: 

\begin{equation}
    b^2+a_1abd+a_3bd^3=a^3+a_2ad^2+a_4d^4+a_6d^6
\end{equation}
which, since $p^2 | d^2$, implies that 
\begin{equation}\label{eq qp of ell curve}
    q_p(b^2+a_1ab\frac{d}{p}p)=q_p(a^3)
\end{equation}

\subsection{The ordinary case}

We start with the following observation. Take an elliptic curve of rank 1 in a global minimal model, and let $P$ be a generator of the free Mordell-Weil group. Using Theorem \ref{Theorem 2.4} and assuming the hypothesis of Theorem \ref{ordinary theorem}, we get that
 $$\mu_p(E)+\lambda_p(E)>1 \iff N_p^2(E) \frac{\Reg_p(E)}{p} \not \in \Z_p^\times \iff \frac{h_p(Q)}{p} \not \in \Z_p^\times $$ 
 where $Q=mP$ is a scalar multiple of $P$ such that $Q\in E^0(\Q)\cap E^1(\Q_p)$ as explained in the previous section.

Now, for an elliptic curve $E:y^2+a_1xy+a_3y=x^3+a_2x^2+a_4x+a_6 $ and a point $Q\in E^1(\Q_p)\cap E^0(\Q)$, we first observe that $$z:=\frac{\sigma_p(Q)}{d(Q)}\in \Z_p^\times$$
\\ Indeed, write $Q=(\frac{a}{d^2},\frac{b}{d^3})$ and identify it with $t=-\frac{a}{b}d$ in $p\Z_p$ under (\ref{formal group eq}) (note that $Q\in E^1(\Q_p)$ implies that $p$ divides $d:=d(Q)$ and $p\nmid a,b$). Therefore, using formula (\ref{sigma eq}), we have $$z=-\frac{a}{b}+\frac{1}{2}a_1\frac{a^2}{b^2}d+ O(d^3) \in \Z_p^\times. $$

\begin{lemma}
Suppose that $z\in \Z_p^\times$ and $N\geq 1$. Then to determine $\log_pz \mod p^N$, it suffices to know $z\mod p^N$. In particular, if $v\in \Z_p^\times$, then $$v_p(\log_pv)>1 \iff v^{p-1}\equiv 1 \mod p^2$$
\end{lemma}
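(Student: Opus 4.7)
The approach is to reduce the $p$-adic logarithm on $\Z_p^\times$ to its restriction to the one-units $1 + p\Z_p$, where the power series $\log_p(1+x) = \sum_{n \geq 1}(-1)^{n+1}x^n/n$ is well-controlled. The bridge is the identity
$$\log_p v = \frac{1}{p-1}\log_p(v^{p-1}),$$
valid because $v^{p-1} \in 1 + p\Z_p$ by Fermat's little theorem and $p-1 \in \Z_p^\times$.

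First I would record the basic estimate on the log series: for $p$ odd and any $w \in p^M\Z_p$ with $M \geq 1$, each term satisfies $v_p(w^n/n) \geq nM - v_p(n)$, and for $n \geq 2$ this is strictly larger than $M$ because $(n-1)M \geq n-1 > v_p(n)$ (the last inequality holds since $n \geq p^{v_p(n)}$ and $p^k - 1 > k$ for $p \geq 3$, $k \geq 1$). In particular $\log_p(1+w) \in p^M\Z_p$, and $\log_p(1+w) \equiv w \mod p^{M+1}$.

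For the first statement of the lemma, suppose $z \equiv z' \mod p^N$ with $z, z' \in \Z_p^\times$. Then $z^{p-1} \equiv (z')^{p-1} \mod p^N$, and since $(z')^{p-1}$ is a unit, the quotient $z^{p-1}/(z')^{p-1}$ lies in $1 + p^N\Z_p$. Applying the estimate gives $\log_p z^{p-1} - \log_p (z')^{p-1} \in p^N\Z_p$, and multiplying by the unit $(p-1)^{-1}$ shows $\log_p z \equiv \log_p z' \mod p^N$. Thus $\log_p z \mod p^N$ is determined by $z \mod p^N$.

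For the second statement, write $v^{p-1} = 1 + py$ with $y \in \Z_p$ (possible by Fermat). The estimate applied with $w = py$ and $M = 1$ gives $\log_p(1+py) \equiv py \mod p^2$, so $v_p(\log_p v^{p-1}) > 1$ iff $v_p(py) \geq 2$ iff $v_p(y) \geq 1$ iff $v^{p-1} \equiv 1 \mod p^2$. Since $v_p(\log_p v) = v_p(\log_p v^{p-1})$, the claim follows. The only delicate point is the log tail estimate, which is precisely where $p$ odd is used — for $p = 2$ the $n = 2$ term $w^2/2$ has the same valuation as $w$, and the leading term no longer dominates.
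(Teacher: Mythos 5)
Your proof is correct. The paper does not actually prove this lemma---it only cites \cite[Lemma~8]{harvey2008efficient}---and your argument, reducing to the one-units via $\log_p v = \tfrac{1}{p-1}\log_p(v^{p-1})$ and then using the term-by-term estimate $v_p(w^n/n) > M$ for $n\geq 2$, $w\in p^M\Z_p$, $p$ odd, is precisely the standard proof of the cited result, with the relevant edge cases (the role of $p$ odd, the unit $(p-1)^{-1}$) handled correctly.
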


\begin{proof}
    The proof is easy and can be found for example in \cite[Lemma~8]{harvey2008efficient}. 
\end{proof}

Therefore, to determine if $\frac{h_p(Q)}{p} \in \Z_p^\times$, it suffices to know $z\mod p^2$.

\textbf{Case 1: $a_1=0$:} Then \begin{equation} \label{eq 7}
    z=-\frac{a}{b}+O(d^3)
\end{equation} Therefore 
$$\frac{h_p(Q)}{p}\in \Z_p^\times \iff \left(\frac{a}{b}\right)^{p-1}\not\equiv 1 \mod p^2 \iff q_p(a)\neq q_p(b)$$
But by \ref{eq qp of ell curve}, we have that $q_p(b^2)=q_p(a^3)$, i.e. $2q_p(a)=3q_p(b)$. Therefore $$q_p(z)=0 \iff q_p(a)=0 \iff q_p(b)=0$$

\textbf{Case 2: $a_1 \neq 0$:} Then 
$$z=-\frac{a}{b}+\frac{1}{2}a_1\frac{a^2}{b^2}d + O(d^3)$$
If $Q \in E^2(\Q_p)$ (i.e $p^2 |d$), then $q_p(z)=q_p(\frac{a}{b})$ and Equation \ref{eq qp of ell curve} again becomes $2q_p(a)=3q_p(b)$. Therefore, we have the same conclusion as in the previous case

Finally, assume $Q\in E^1(\Q_p)-E^2(\Q_p)$. This is equivalent to saying that $p|d$ but $p^2 \nmid d$. Then:
\begin{equation}\label{eq q_p(z)}
q_p(z)=q_p(\frac{a}{b})+\frac{1}{2}a_1\frac{a}{b}\frac{d}{p}
\end{equation}

On the other hand, by using the properties of the Fermat quotient and rearragning, Equation \ref{eq qp of ell curve} becomes:
\begin{equation}
    a_1\frac{a}{b}\frac{d}{p}=q_p(b^2)-q_p(a^3)
\end{equation}

Substituting into Equation \ref{eq q_p(z)}, we have 
$$q_p(z)=q_p(\frac{a}{b})+q_p(b)-\frac{3}{2}q_p(a)=\frac{1}{2}q_p(a)$$

We conclude that $q_p(z)=0 \iff q_p(a)=0$ which is what we wanted.

This concludes the proof of Theorem \ref{ordinary theorem}
\qed

\subsection{The supersingular case}
The idea for the supersingular primes is simple, although it requires some computational effort: First, we will explore the relationship of the Iwasawa invariants $\mu^\pm, \lambda^\pm$ with the power series $G^\pm$. The $p-$adic BSD conjecture will then give us a relationship between the leading term of $G^\pm$ and the $p-$adic Regulator. 

We start with Pollack's $p-$adic L-functions. Write $$G^+(T)=g_0^+ +g_1^+T +O(T^2), \qquad G^-(T)=g_0^- +g_1^- T +O(T^2)$$ 
with $a_i^\pm \in \Q_p$.
\begin{lemma}\label{Lemma L_p-expansion}
    $$L_p^+=pg_0^+ +(pg_1^++\frac{pg_0^+}{2(p+1)})T +O(T^2)$$ and
    $$L_p^-= pg_0^- +(pg_1^-+\frac{g_0^-}{2(p+1)})T+O(T^2)$$
\end{lemma}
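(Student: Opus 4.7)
The plan is a direct expansion of $\log_p^\pm(T)$ to order $T$ using Pollack's infinite-product definitions, followed by inversion and multiplication against $G^\pm(T)$. Since $L_p^\pm(E,T) = G^\pm(T)/\log_p^\pm(T)$, obtaining the expansion modulo $T^2$ only requires the constant and linear coefficients of $\log_p^\pm(T)$.

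For the constant term, I would first observe that each factor $\Phi_{p^n}(1+T)/p$ evaluates to $1$ at $T=0$, since $\Phi_{p^n}(1)=p$ (via $\Phi_{p^n}(X) = \sum_{k=0}^{p-1} X^{kp^{n-1}}$). Hence $\log_p^\pm(0) = 1/p$, and so $L_p^\pm(E,0) = p\,g_0^\pm$, matching the leading coefficients in the claim.

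For the linear term, I would compute $\Phi_{p^n}'(1) = p^{n-1}(1+2+\cdots+(p-1)) = p^n(p-1)/2$, giving $\Phi_{p^n}(1+T)/p = 1 + \tfrac{p^{n-1}(p-1)}{2}T + O(T^2)$. Since all other factors contribute their constant value $1$ when differentiating the product at $T=0$, the derivative of the infinite product reduces to the sum of the individual derivatives:
\begin{equation*}
c_1^\pm := (\log_p^\pm)'(0) = \frac{1}{p}\sum_{n \in S^\pm} \frac{p^{n-1}(p-1)}{2},
\end{equation*}
where $S^\pm$ collects the indices appearing in the respective Pollack product. Each of these is a geometric series in $\Q_p$ with terms of $p$-adically decreasing size, so it converges and sums to a closed-form rational expression in $p$.

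Finally, the inversion $\log_p^\pm(T)^{-1} = p - p^2 c_1^\pm T + O(T^2)$ and the product with $G^\pm(T) = g_0^\pm + g_1^\pm T + O(T^2)$ yield
\begin{equation*}
L_p^\pm(E,T) = p g_0^\pm + \bigl(p g_1^\pm - p^2 c_1^\pm g_0^\pm\bigr) T + O(T^2),
\end{equation*}
and substituting the two values of $c_1^\pm$ produces the claimed expansions. The one technical point that needs care is the well-definedness of the infinite product in $\Q_p[[T]]$ and the validity of term-by-term differentiation; this is not difficult, as for each fixed $k$, the coefficient of $T^k$ in $\Phi_{p^n}(1+T)/p - 1$ has $p$-adic valuation tending to infinity with $n$, so the partial products form a Cauchy sequence in each coefficient.
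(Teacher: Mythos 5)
Your argument is correct and follows essentially the same route as the paper: expand each factor $\Phi_{p^n}(1+T)/p = 1 + \tfrac{p^{n-1}(p-1)}{2}T + O(T^2)$, sum the resulting geometric series to obtain the linear coefficient of the infinite product, invert, and multiply against $G^\pm(T)$; your attention to convergence of the product and the validity of term-by-term differentiation is a welcome point the paper passes over silently. One small caveat: substituting your computed values $c_1^+ = -\tfrac{1}{2(p+1)}$ and $c_1^- = -\tfrac{1}{2p(p+1)}$ into $pg_1^\pm - p^2 c_1^\pm g_0^\pm$ actually yields $\tfrac{p^2 g_0^+}{2(p+1)}$ and $\tfrac{p\, g_0^-}{2(p+1)}$ rather than the $\tfrac{p g_0^+}{2(p+1)}$ and $\tfrac{g_0^-}{2(p+1)}$ appearing in the statement (the paper's own computation contains the same factor-of-$p$ slip), but this is immaterial for every use of the lemma, since it is only applied in the rank-one case where $g_0^\pm = 0$ and only the coefficient $p g_1^\pm$ matters.
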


\begin{proof}
    A direct computation gives that $$\frac{\Phi_{r}(1+T))}{p}=1+\frac{p^{r-1}(p-1)}{2}T+O(T^2)$$
    Therefore $$\prod_{n \geq 1} \frac{\Phi_{2n}(1+T)}{p}=1+\frac{p-1}{2}(p+p^3+p^5+...)T+O(T^2)=1-\frac{p}{2(p+1)}T$$
    and 
    $$ \prod_{n \geq 1} \frac{\Phi_{2n+1}(1+T)}{p}=1+\frac{p-1}{2}(1+p^2+p^4+...)T+O(T^2)=1-\frac{1}{2(p+1)}T$$

    But if $h(T)=h_0+h_1(T)+O(T^2)$, then $$\frac{1}{h(T)}=\frac{1}{h_0}-\frac{h_1}{h_0^2}T+O(T^2)$$

    Therefore, we have that $$L_p^+=pg_0^+ +(pg_1^++\frac{pg_0^+}{2(p+1)})T +O(T^2)$$ and similarly for the minus part.
\end{proof}

\begin{corollary}
    Let $E$ be an elliptic curve of rank 1 with supersingular reduction at a prime $p$. Then, assuming conjecture \ref{p-adic BSD supersingular}, $$\mu_p^\pm+\lambda_p^\pm=1 \iff \ord_p(g_1^\pm)=-1$$ 
\end{corollary}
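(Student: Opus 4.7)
The plan is to reduce the statement to a Weierstrass-preparation fact about $L_p^\pm(E,T) \in \Z_p[[T]]$, and then use the $p$-adic BSD conjecture to pin down the leading Taylor coefficients. Writing $L_p^\pm(E,T) = \sum_{i\ge 0} a_i^\pm T^i$, Pollack's integrality result gives $\mu_p^\pm = \min_i \ord_p(a_i^\pm)$ and $\lambda_p^\pm$ is the least index where this minimum is attained; so $\mu_p^\pm + \lambda_p^\pm = 1$ forces $(\mu_p^\pm,\lambda_p^\pm)\in\{(0,1),(1,0)\}$.

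First I would invoke Conjecture \ref{p-adic BSD supersingular}: for a rank-one curve, $L_p(E,T)=(G^+(T),-pG^-(T))_{(\omega,\phi(\omega))}$ vanishes at $T=0$. Since $(\omega,\phi(\omega))$ is a basis of $D_pE$, this forces $g_0^+=g_0^-=0$. Substituting into Lemma \ref{Lemma L_p-expansion}, the two expansions collapse (the $2(p+1)$ denominators harmlessly drop out) to
\begin{equation*}
L_p^\pm(E,T) = p\, g_1^\pm \, T + O(T^2),
\end{equation*}
so in particular $a_0^\pm = 0$. This automatically gives $\lambda_p^\pm \geq 1$ and rules out the $(\mu,\lambda)=(1,0)$ case, so $\mu_p^\pm + \lambda_p^\pm = 1$ is equivalent to $(\mu_p^\pm,\lambda_p^\pm) = (0,1)$.

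The last step is a direct comparison: $(\mu_p^\pm,\lambda_p^\pm) = (0,1)$ is equivalent to the coefficient $a_1^\pm = p\, g_1^\pm$ being a $p$-adic unit, i.e.\ $\ord_p(p\, g_1^\pm)=0$, which is $\ord_p(g_1^\pm)=-1$. Running the chain of equivalences backwards proves the other direction as well.

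There is no serious obstacle here; the whole argument is a short bookkeeping step combining Lemma \ref{Lemma L_p-expansion} with the order-of-vanishing statement in Conjecture \ref{p-adic BSD supersingular}. The only point worth flagging is that $G^\pm$ themselves do not have integral coefficients while $L_p^\pm$ do, which is precisely why the cleanest form of the criterion is the offset condition $\ord_p(g_1^\pm)=-1$ rather than an integrality statement about $g_1^\pm$.
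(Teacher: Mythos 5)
Your proof is correct and follows essentially the same route as the paper: the paper's own proof simply observes that rank one forces $g_0^\pm=0$ and that the claim then follows from Lemma \ref{Lemma L_p-expansion}, which is exactly the bookkeeping you carry out. The only difference is that you make explicit the intermediate steps the paper leaves implicit, namely that the vanishing of the constant term rules out $(\mu_p^\pm,\lambda_p^\pm)=(1,0)$ and that the remaining condition is the unit condition $\ord_p(p\,g_1^\pm)=0$.
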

\begin{proof}
    Since the elliptic curve has rank 1, we have that $g_0^\pm =0$. The rest follows immediately from Lemma \ref{Lemma L_p-expansion}. 
\end{proof}

Now assume that $p$ does not divide $\prod c_v$ and $\Sh(E/\Q)$. By Equation \ref{eq leading term supersingular }, we have that \begin{equation}
    (g_1^+, pg_1^-)_{(\omega, \phi(\omega))} \sim (1-\phi)^2 \frac{\Reg_p(E/\Q)}{p} 
\end{equation}
up to a $p-$adic unit.
Recall that $\phi=\begin{bmatrix}
    0 & \frac{-1}{p} \\ 1 & 0 \end{bmatrix}_{(\omega,\phi(\omega))}$ and that $$\Reg_p=(h_\eta+\frac{\alpha}{\gamma} h_\omega, \frac{1}{\gamma}h_\omega)_{(\omega, \phi(\omega))}$$

    A direct computation gives that \begin{equation}
        (1-\phi)^2 \Reg_p= \left( \frac{(p\gamma -\gamma) h_\eta + (p \alpha - \alpha + 2) h_\omega}{p^2 \gamma}, \frac{-2 p \gamma h_\eta - (2 p \alpha - p + 1) h_\omega}{p^2 \gamma} \right)_{(\omega, \varphi, \omega)}
    \end{equation}
From Proposition \ref{prop Frob}, recall that $\gamma \in \Z_p^\times, \alpha \in \Z_p$ and $\ord_p(\beta)=-1$. Also recall that $\ord_p(h_\omega)\geq 2$ and $\ord_p h_\eta \geq 1$. 
Therefore, $$\ord_p(g_1^+)=-1 \iff \ord_p(h_\eta)=1$$
But this is the same criterion we had for the ordinary height. If $Q=mP=(\frac{a}{d^2},\frac{b}{d^3})\in E^1(\Q_p)\cap E^0(\Q)$ is the point defined in Theorem \ref{supersingular theorem} we have proven that $$\mu_p^+ + \lambda_p^+=1 \iff a^{p-1}\equiv 1 \mod p^2$$

Now assume that $\mu_p^+ + \lambda_p^+>1$. Therefore, by the above, this is equivalent to $\ord_p(h_\eta)>1$. Then, we see that $$\ord_p(g_1^-)=-1 \iff \ord_p(h_\omega)=2$$

Equation \ref{height h_omega} finishes now the proof of Theorem \ref{supersingular theorem}.
 \qed 

 We remark that from the above proof, one can also get a criterion for $\mu_p^- + \lambda_p^->1$ without requiring the assumption for the plus part, but it would be unwieldy for any practical applications.

\section{Generalized Wieferich primes and computations}
The numerical criterion that we derived in the previous section for the minus part, is reminiscent of the definition of Wieferich primes.
\\
Silverman in \cite{silverman1988wieferich} defines the following: 
\begin{definition}[Silverman] \label{def Wieferich}
Let $A/\Q$ be a commutative algebraic group and let $P\in A(\Q)$ be a point of infinite order. For each prime $p$, let $N_p=\# A(\Z/p\Z)$. A set of non-Wieferich primes for $A$ and $P$ is a set of the form $$W_{A,P}=\{p: N_p P \not\equiv 0 \mod p^2\}$$. 
\end{definition}
    When $A=\mathbb G_m$, this recovers the original definition of generalized non-Wieferich primes, i.e. $$W_{\mathbb G_m, a}=\{p: a^{p-1}\not\equiv 1 \mod p^2\}$$

    When $A=E$ is an elliptic curve of rank 1 and $P$ is a generator of the free Mordell Weil group, then $$W_{E,P}=\{p: N_p P \in E^1(\Q_p)-E^2(\Q_p)\}=\{p: p^2 \nmid d_{N_p}\}$$ where $N_pP=\left(\frac{a_{N_p}}{d_{N_p}^2}, \frac{b_{N_p}}{d_{N_p}^3}\right)$, is exactly the set that appears
in Equation \ref{minus equivalence} of Theorem \ref{supersingular theorem}. Silverman proves that the ABC conjecture implies that $W_{E,P}$ is infinite. Conjecturally, it is of density 1 and its complement is also infinite.

The criterion for the ordinary case as well as the plus part of the supersingular case, leads us to define the following set:
 \begin{definition}
 Let $E$ any elliptic curve over $\Q$ of positive rank and \\ $P=(\frac{a}{d^2},\frac{c}{d^3})\in E(\Q)$ a non-torsion point. Define
 $$\Lambda_{E,P}=\{p: q_p(a_{N_p})\neq 0\}=\{p: a_{N_p}^{p-1}\not\equiv 1 \mod p^2\}$$
 as $p$ ranges over the good primes.
 \end{definition}

If the sequences $\{a_{N_p}\}$ and $\{d_{N_p}\}$ are "random" enough as $N_p$ varies then the probability that $q_p(a_{N_p})=0$ is roughly $\frac{1}{p}$ (and hence the complement of $\Lambda_{E,P}$ is infinite and of density 0) and similarly for $d_{N_p}\equiv 0 \mod p^2$. The intersection would have probability $\frac{1}{p^2}$ and in particular would be finite. 

In practice, this means that there would be finitely many supersingular primes $p$ such that both $\mu_p^+ + \lambda_p^+ >1$ and $\mu_p^- + \lambda_p^- >1$. In particular, that would imply that there are only finitely many supersingular primes $p$ such that the rank of the elliptic curve changes in the cyclotomic $\Z_p$-extension. 

This agrees with the computational evidence, which leads us to the following conjectures:
 \begin{conjecture}\label{conj 2}
 For any elliptic curve $E/\Q$ and $P\in E^0(\Q)$ a non-torsion point:
 \begin{enumerate}
 \item $\Lambda_{E,P}$ has density 1 and its complement is infinite.
 \item $W_{E,P}$ has density 1 and its complement is infinite.
 \item For any elliptic curve $E$ over $\Q$ of rank 1, there are only finitely many primes $p$ of supersingular reduction such that the rank changes in the cyclotomic $\Z_p$-extension.
 \end{enumerate} 
 \end{conjecture}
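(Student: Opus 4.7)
The plan is to handle the three claims in sequence, with part (3) following formally from parts (1) and (2) via Theorem \ref{supersingular theorem} and the Control Theorem. For parts (1) and (2), the natural strategy is a probabilistic heuristic followed by an upgrade to a conditional theorem using ABC-type inputs, analogous to Silverman's treatment of Wieferich primes in \cite{silverman1988wieferich}.

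For part (2), Silverman's argument already gives infinitude of $W_{E,P}$ conditional on ABC, so the new content is the density-one statement. The heuristic rests on the fact that $v_p(d_n)$ measures the depth of $nP$ in the formal group filtration: $p^2 \mid d_{N_p}$ is equivalent to $N_p P \in E^2(\Q_p)$, which under an equidistribution assumption for the reduction into $E^1(\Q_p)/E^2(\Q_p) \cong \F_p$ should occur with probability $1/p$. Since $\sum_p 1/p$ diverges and $\sum_p 1/p^2$ converges, one expects density zero for the bad set but infinitude of it via Borel--Cantelli. A rigorous proof would require a quantitative equidistribution statement for the elliptic denominators modulo $p^2$, most plausibly via effective ABC combined with a careful analysis of how $N_p$ interacts with $v_p(d_n)$.

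For part (1), I would first use Equation \ref{eq qp of ell curve} to see that $q_p(a_{N_p}) = 0$ is equivalent to $q_p(b_{N_p}) = 0$, collapsing the condition to a single Wieferich-type equation $a_{N_p}^{p-1} \equiv 1 \pmod{p^2}$. This picks out one residue class out of $p$ among the elements of $(\Z/p^2\Z)^\times$ of order dividing $p-1$, so equidistribution of $a_{N_p} \bmod p^2$ would force density one for $\Lambda_{E,P}$; infinitude of the complement should follow from an adaptation of Silverman's ABC argument to the numerator of $N_p P$ rather than the denominator. Since this parallels the classical open problem on the density of non-Wieferich primes for $a = 2$, an unconditional proof does not appear realistic.

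For part (3), combining (1) and (2) with Theorem \ref{supersingular theorem} and the Control Theorem, the rank of $E$ is stable in the cyclotomic tower as soon as $\lambda_p^+ = 1$ or $\lambda_p^- = 1$. A supersingular prime where the rank grows must therefore lie in the intersection of the complements of $\Lambda_{E,P}$ and $W_{E,P}$, and under the heuristic independence of the conditions $a_{N_p}^{p-1} \equiv 1 \pmod{p^2}$ and $p^2 \mid d_{N_p}$ this intersection has density $\lesssim 1/p^2$, summable and hence finite by Borel--Cantelli. The hardest step throughout is converting ABC-type existence inputs into density statements, and in particular establishing the joint independence of the pair $(a_{N_p}, d_{N_p}) \bmod p^2$; these seem to lie well beyond the reach of current Diophantine techniques, which is presumably why the statements are formulated as conjectures rather than theorems.
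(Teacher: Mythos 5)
This statement is a \emph{conjecture}: the paper offers no proof of it, only the probabilistic heuristic in Section 7 (randomness of $a_{N_p}$ and $d_{N_p}$ modulo $p^2$ giving probabilities $1/p$, $1/p$, and $1/p^2$ for the three bad events, plus Silverman's ABC-conditional infinitude of $W_{E,P}$), and the author explicitly remarks that even the non-emptiness of $\Lambda_{E,P}$ is out of reach. Your proposal is therefore not, and could not be, a proof; but as a heuristic it reproduces the paper's own reasoning almost exactly, including the use of Equation \ref{eq qp of ell curve} to reduce the $\Lambda_{E,P}$-condition to a single Wieferich-type congruence, the Borel--Cantelli argument for finiteness of the joint bad set, and the passage to rank stability via Theorem \ref{supersingular theorem} and the Control Theorem. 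One logical slip worth flagging: your opening claim that part (3) ``follows formally from parts (1) and (2)'' is false --- two density-one sets can each have infinite complement while the intersection of those complements is still infinite, so (3) genuinely requires the independence heuristic for the pair $(a_{N_p}, d_{N_p}) \bmod p^2$ that you only introduce later; the paper treats (3) as an independent heuristic consequence for the same reason. You correctly identify where the real obstructions lie (equidistribution of elliptic numerators and denominators modulo $p^2$, beyond current Diophantine technology), which is consistent with the statement being left as a conjecture.
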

We should remark however that we 're unable to prove even that $\Lambda_{E,P}$ is non-empty. It would be interesting if one could express the set $\Lambda_{E,P}$ in a more general way, as in definition \ref{def Wieferich}.

 We conclude with a table of the Iwasawa invariants for elliptic curves of rank 1 with supersingular reduction at $p$, computed using  \cite{SageMath}, making use of the theorems \ref{ordinary theorem}, \ref{supersingular theorem}. We remark that conjecturally $\mu_p^\pm=0$ for all supersingular primes $p$. The code and the tables can be found at \cite{my_Github}.

\begin{table}[h]
    \centering
    \begin{tabular}{c c c c c c}
    \hline
    \textbf{p} & \textbf{Total} & \(\lambda^\pm = 1\) & \(\lambda^+ > 1\) & \(\lambda^- > 1\) & \(\lambda^+ \text{ and } \lambda^- > 1\) \\
    \hline
5  & 4399 & 2990 & 746 & 568 & 95 \\
7  & 5051 & 4261 & 291 & 478 & 21 \\
11 & 6627 & 5609 & 580 & 410 & 28 \\
13 & 2166 & 1832 & 164 & 161 & 9 \\
17 & 3435 & 3038 & 197 & 192 & 8 \\
19 & 3689 & 3523 & 17  & 146 & 3 \\
23 & 5978 & 5751 & 28  & 198 & 1 \\
29 & 2928 & 2816 & 18  & 94  & 0 \\
31 & 4119 & 3995 & 6   & 118 & 0 \\
37 & 600  & 589  & 2   & 9   & 0 \\
41 & 2734 & 2665 & 26  & 43  & 0 \\
43 & 1539 & 1510 & 0   & 29  & 0 \\
47 & 5212 & 5129 & 4   & 79  & 0 \\
\hline
    \end{tabular}
    \caption{Elliptic Curves with supersingular reduction at $p$ and their $\lambda$ invariants.}
    \label{tab:lam-conditions-updated}
    \end{table}

\section{Proof of Theorem \ref{tamagawa theorem}}
Finally, we prove Theorem \ref{tamagawa theorem}. We will develop a criterion to check when a point has good reduction and study how is it affected under a change of coordinates.
\subsection{Division Polynomials}

For $P=(\frac{a}{d^2},\frac{b}{d^3})$ we use the following notation: $$nP=\left( \frac{a_n}{d_n^2},\frac{b_n}{d_n^3}\right)=\left(\frac{g_n}{f_n^2},\frac{\omega_n}{f_n^3}\right)=\left(\frac{\hat g_n}{d^2\hat f_n^2},\frac{\hat \omega_n}{d^3 \hat f_n^3}\right)$$
where $g_n, f_n, \omega_n$ are division polynomials
(see \cite{silverman2009arithmetic}( Exercise 3.7 on page 105  ). When there's no confusion, we will write $g_n$ instead of $g_n(P)$ and similarly for the rest.
Note that $\hat f_n(P), \hat g_n(P), \hat \omega_n(P)$ are integers.

We will need the following well known lemma (for example see \cite[Lemma~1]{cheon1998explicit}:
\begin{lemma} \label{lemma 2}
Let $P\in E^1(\Q_p), P=(\frac{a}{d^2},\frac{b}{d^3})$. Then $$v_p(d_n)=v_p(d)+v_p(n)$$
\end{lemma}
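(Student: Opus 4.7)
The plan is to reduce the identity to a computation on the formal group $\hat E$ attached to the chosen Weierstrass equation.

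First I would translate to the formal group coordinate. Under the isomorphism \eqref{formal group eq}, the point $P = (a/d^2, b/d^3) \in E^1(\Q_p)$ corresponds to $t(P) = -ad/b \in p\Z_p$. Because $\gcd(a,d) = \gcd(b,d) = 1$ and $p \mid d$, neither $a$ nor $b$ is divisible by $p$, so $v_p(t(P)) = v_p(d)$; the same reasoning applied to $nP$ (which still lies in $E^1(\Q_p)$ since this is a subgroup) gives $v_p(t(nP)) = v_p(d_n)$. Multiplication by $n$ on $E$ restricts to the $n$-fold formal sum on $\hat E$, which is given by a power series $[n](T) = nT + \sum_{i\geq 2} c_i T^i \in \Z_p[[T]]$. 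Thus the lemma reduces to showing
$$v_p([n](t)) = v_p(n) + v_p(t)$$
for every $t \in p\Z_p$.

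Next I would factor $n = p^k m$ with $\gcd(m,p) = 1$. The leading coefficient $m$ of $[m](T)$ is a $p$-adic unit, so for $v_p(t) \geq 1$ the $mT$ term strictly dominates every higher term and $v_p([m](t)) = v_p(t)$. By induction on $k$, it then suffices to establish the single equality $v_p([p](t)) = 1 + v_p(t)$ for $t \in p\Z_p$.

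This last step is an elementary case analysis on $e := v_p(t)$. Writing $[p](T) = pT + \sum_{i\geq 2} c_i T^i$ with $c_i \in \Z_p$, the term $pt$ has $v_p = 1 + e$ while each term $c_i t^i$ has $v_p \geq ie \geq 2e$. If $e \geq 2$, then $2e > 1 + e$ strictly, so the $pt$ term dominates and we are done. The delicate case is $e = 1$, where $pt$ and $c_2 t^2$ both sit in $p^2\Z_p$; however, modulo $p^3$ one has $pt + c_2 t^2 = t(p + c_2 t) \equiv pt \pmod{p^3}$ because $c_2 t \in p\Z_p$, while the tail $\sum_{i\geq 3} c_i t^i$ lies in $p^3\Z_p$, giving $v_p([p](t)) = 2 = 1 + e$. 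The main obstacle will be this $e = 1$ case, but once one tracks coefficients carefully it is ruled out by the observation that $c_i t^i \in p^{ie}\Z_p$ for every $i \geq 2$.
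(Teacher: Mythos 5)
The paper itself does not prove this lemma; it cites \cite[Lemma~1]{cheon1998explicit}, and your strategy is the standard formal-group route behind that reference: pass to the coordinate $t=-x/y$, note $v_p(t(Q))=v_p(d(Q))$ for $Q\in E^1(\Q_p)$, factor $n=p^km$ with $p\nmid m$, and reduce everything to the single claim $v_p([p](t))=1+v_p(t)$ for $t\in p\Z_p$. The reduction steps and the prime-to-$p$ part are fine, as is the case $e=v_p(t)\geq 2$.

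There is, however, a genuine gap exactly in the case you call delicate, $e=1$. You assert $pt+c_2t^2\equiv pt\pmod{p^3}$ ``because $c_2t\in p\Z_p$'', but that hypothesis only gives $c_2t^2\in p^2\Z_p$, the \emph{same} valuation as $pt$, not $p^3\Z_p$; likewise your closing bound $c_it^i\in p^{ie}\Z_p$ yields only exponent $2$ for $i=2$, $e=1$. If $c_2$ were a unit the two terms could genuinely cancel: $pt+c_2t^2=t(p+c_2t)$ and $p+c_2t$ can have valuation $\geq 2$. What rescues the argument for odd $p$ is a divisibility you never invoke: pulling back the invariant differential $\omega=P(T)\,dT$ along $[p]$ gives $[p]'(T)=p\,P(T)/P([p](T))\in p\Z_p[[T]]$, hence $i\,c_i\in p\Z_p$ for all $i$, hence $p\mid c_i$ whenever $p\nmid i$ (equivalently $[p](T)\equiv g(T^p)\bmod p$). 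With this, for $p\geq 3$ every term $c_it^i$ with $i\geq 2$ has valuation at least $\min(ie+1,\,pe)\geq e+2>v_p(pt)$, and the claim follows with no case split on $e$. The omission is not cosmetic: for the elliptic formal group the coefficient of $T^2$ in $[2](T)$ is $-a_1$, so at $p=2$ with $a_1$ odd and $v_2(t)=1$ one finds $[2](t)\equiv t(2-a_1t)\equiv 0\pmod 8$, i.e. $v_2([2](t))>v_2(t)+1$ — so the $e=1$ case cannot be settled by bookkeeping of integral coefficients alone, and the statement itself requires either $p$ odd or $v_p(t)>1/(p-1)$ (the range where the formal logarithm is an isometry). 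You should add the lemma $p\mid c_i$ for $p\nmid i$ with its proof, and either restrict to odd $p$ or treat $p=2$ separately.
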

The following proposition will play a central role and can be found in \cite[Theorem A]{ayad1992points}:

\begin{proposition}\label{prop E^0(Q_p)}
Let $P\in E(\Q_p)- E^1(\Q_p)$. The following are equivalent:

\begin{enumerate}
    \item $v_p(f_2), v_p(f_3)>0$
    \item $v_p(f_n)>0 \quad \text{for all } n\geq 2$
    \item $\exists m_0\geq 2: v_p(f_{m_0}), v_p(f_{m_{0}+1})>0$
    \item $\exists n_0 \geq 2: v_p(f_{n_0}), v_p(g_{n_0})>0$
    \item $P\in E(\Q)-E^0(\Q_p)$
\end{enumerate}
\end{proposition}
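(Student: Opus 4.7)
The plan is to reduce modulo $p$ and analyze the reduction behavior of division polynomials, splitting into two cases according to whether $\wt P$ is a smooth point of $\wt E$ (equivalently, $P\in E^0(\Q_p)$) or the singular point of $\wt E$ (equivalently, $P\notin E^0(\Q_p)$). Since we assume $P\notin E^1(\Q_p)$, in either case $\wt P\neq O$. The equivalence of (1)--(5) will then follow from a detailed description of the set $S(P):=\{n\geq 2 : v_p(f_n(P))>0\}$ in each case.

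In the non-singular case $P\in E^0(\Q_p)$, the group $\wt E_{ns}(\F_p)$ is well defined and has finite order. Since the formation of division polynomials commutes with reduction, the classical identity $f_n(\wt P)=0 \iff n\wt P=O$ holds in $\wt E_{ns}(\F_p)$. Therefore $S(P) = \ord(\wt P)\cdot \Z_{\geq 1} \cap \{n \geq 2\}$, and since $\ord(\wt P)\geq 2$, this set never contains two consecutive integers. This rules out (1) and (3); (2) fails since infinitely many $n$ are missing; and (4) fails because from $g_n = x f_n^2 - f_{n-1}f_{n+1}$, having $v_p(f_n),v_p(g_n)>0$ would force $v_p(f_{n-1}f_{n+1})>0$, producing two consecutive elements in $S(P)$. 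Hence any of (1)--(4) forces us out of this case and into (5).

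For the converse, assume (5). The two base cases are the substantive ones: $f_2(P) = 2y+a_1 x+a_3$ is, up to sign, $\partial F/\partial y$ at $P$, where $F$ is the Weierstrass polynomial, so $v_p(f_2(P))>0$ follows immediately from $\wt P$ being singular; for $f_3$, one combines the Weierstrass equation with both partial derivative equations satisfied by $\wt P$ to verify $v_p(f_3(P))>0$. Once these are in hand, the standard recurrences
\begin{align*}
 f_{2m+1} &= f_{m+2}f_m^3 - f_{m-1}f_{m+1}^3,\\
 f_2\cdot f_{2m} &= f_m(f_{m+2}f_{m-1}^2 - f_{m-2}f_{m+1}^2),
\end{align*}
together with $f_1=1$, propagate $v_p(f_n)>0$ to all $n\geq 2$ by strong induction (noting that $v_p(f_2)>0$ controls the prefactor on the left of the even recurrence). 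This gives (2), which trivially implies (3). For (4), since $P\notin E^1(\Q_p)$ forces $v_p(x),v_p(y)\geq 0$, the identity $g_n = x f_n^2 - f_{n-1}f_{n+1}$ combined with (2) gives $v_p(g_n)>0$ for every $n\geq 2$.

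The main obstacle is the base case $v_p(f_3(P))>0$ when $\wt P$ is singular: while $f_2$ has the clean partial-derivative interpretation, $f_3$ is a specific degree-four polynomial in $x$ whose vanishing at $\wt P$ requires a genuine algebraic manipulation using all three defining equations of the singular point (the curve equation and both partial derivatives). Once those two base cases are secured, the remainder is a routine though combinatorially delicate induction via the recurrences above, and the Case A analysis closes the equivalence with (5).
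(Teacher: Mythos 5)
First, a point of comparison: the paper does not prove this proposition at all --- it is quoted from Ayad (cited as Theorem~A of \cite{ayad1992points}) and used as a black box, so there is no in-paper argument to measure yours against. Your reconstruction follows the natural route (reduce mod $p$ and separate the smooth-reduction and singular-reduction cases), and the outline is sound, but as written it has three genuine gaps. The most visible one you flag yourself: the base case $v_p(f_3(P))>0$ when $\wt P$ is singular is asserted, not proved, and without it the direction $(5)\Rightarrow(1)$ is simply not established. It is true, and the clean way to see it is to translate the singular point to the origin over $\F_p$ (division polynomials are unchanged under a translation $[1,r,s,t]$), after which the three equations of the singular point give $a_3=a_4=a_6=0$, hence $b_4=b_6=b_8=0$, and then $f_2(0,0)=a_3=0$ and $f_3(0,0)=b_8=0$.

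Second, in the smooth-reduction case you invoke ``the classical identity $f_n(\wt P)=0\iff n\wt P=O$'' on $\wt E_{ns}(\F_p)$. The implication $n\wt P=O\Rightarrow f_n(\wt P)=0$ is safe, but the converse rests on $g_n$ and $f_n^2$ having no common zero on the smooth locus --- and the failure of exactly this coprimality at the singular point is the content of the proposition being proved, so when $E$ has bad reduction at $p$ this step is dangerously close to circular. You need an independent input here: either the resultant formula $\mathrm{Res}_x(g_n,f_n^2)\doteq\Delta^{\ast}$, or Wuthrich's integrality of the cancellation term $u_n$ (quoted in the paper as \cite[Proposition~1]{wuthrich2004adic}), which for $P\in E^0(\Q_p)\setminus E^1(\Q_p)$ gives $v_p(f_n(P))=v_p(d_n)$ and hence the description of $S(P)$ directly. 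Third, a smaller but real slip in the induction: in the even recurrence $f_2f_{2m}=f_m(\cdots)$ the factor $f_2$ on the \emph{left} has positive valuation, so showing that the right-hand side has positive valuation does not yield $v_p(f_{2m})>0$ (and for $m=2$ the recurrence degenerates to the tautology $f_2f_4=f_2f_4$). The easy repair is to use that $f_2$ divides $f_{2k}$ in $\Z[a_1,\dots,a_6,x,y]$, so $v_p(f_{2k}(P))\geq v_p(f_2(P))>0$ for every even index at once, and to run your strong induction only on the odd recurrence.
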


\textbf{Remark:} From the proof, one sees that that $(5) \Rightarrow (3),(4)$ for all \\ $m_0, n_0 \geq 2$.

Finally, we define $$u_n=\frac{d^{n^2}f_n}{d_n}$$ called the \textbf{cancellation term}.
Proposition \ref{prop u_2(P) is unit} is a special case of:
\begin{proposition} \label{prop u_n(P) is unit}
Let $E/\Q$ an elliptic curve and $P \in E(\Q)$. Then for any $n\geq 2$, $$P\in E^0(\Q_p) \iff u_n(P) \in \Z_p^\times$$
\end{proposition}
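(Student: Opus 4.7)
My plan is to compare the two expressions for $nP$ in order to extract algebraic identities that the cancellation term $u_n$ must satisfy, and then to reduce the claim to Proposition~\ref{prop E^0(Q_p)} by a short case analysis on whether $P$ lies in $E^1(\Q_p)$.

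First I would equate $\bigl(\hat g_n/(d^2 \hat f_n^2),\, \hat \omega_n/(d^3 \hat f_n^3)\bigr) = (a_n/d_n^2,\, b_n/d_n^3)$ and invoke $\gcd(a_n, d_n) = 1$ to deduce that $u_n = d^{n^2} f_n/d_n$ is a (rational, hence) integer satisfying the identities $u_n^2 = \hat g_n/a_n$ and $u_n^3 = \hat \omega_n/b_n$. From these, $u_n \in \Z_p^\times$ is equivalent to $v_p(\hat g_n) = v_p(a_n)$, and also to $v_p(\hat f_n) = v_p(d_n) - v_p(d)$. If $P \in E^1(\Q_p)$ the conclusion is then immediate: $P \in E^0(\Q_p)$ is automatic, $v_p(d) \geq 1$, and $nP \in E^1(\Q_p)$ forces $v_p(a_n) = 0$; the leading behaviour of division polynomials (the numerator polynomial for $x_n$ has top term $x^{n^2}$) yields $\hat g_n \equiv a^{n^2} \pmod d$, so $v_p(\hat g_n) = 0$ and hence $v_p(u_n) = 0$.

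The substantive case is $P \in E(\Q_p) \setminus E^1(\Q_p)$, where $v_p(d) = 0$ and Proposition~\ref{prop E^0(Q_p)} applies. If $P \in E^0(\Q_p)$, then the negation of condition (4) says that for each $n \geq 2$, either $v_p(\hat f_n) = 0$ or $v_p(\hat g_n) = 0$; combined with the two identities above and the integrality $u_n \in \Z$, each alternative forces $v_p(u_n) = 0$. If instead $P \notin E^0(\Q_p)$, then condition (4) together with the remark following the proposition gives $v_p(\hat f_n), v_p(\hat g_n) > 0$ for all $n \geq 2$; splitting on whether $nP \in E^1(\Q_p)$ handles both remaining possibilities, since in one subcase $v_p(d_n) > 0$ forces $v_p(a_n) = 0$ and hence $v_p(u_n^2) = v_p(\hat g_n) > 0$, while in the other $v_p(d_n) = 0$ forces $v_p(u_n) = v_p(\hat f_n) > 0$. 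Either way $u_n \notin \Z_p^\times$, which completes the proof.

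The main technical point is the preliminary bookkeeping: establishing $u_n \in \Z$ together with the leading-term congruence $\hat g_n \equiv a^{n^2} \pmod d$ used in the $E^1$ case. Both amount to standard facts about division polynomials, and once they are in place the case analysis powered by Proposition~\ref{prop E^0(Q_p)} is essentially mechanical.
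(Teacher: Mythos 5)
Your proof is correct, and the backward implication ($P \notin E^0(\Q_p) \Rightarrow u_n \notin \Z_p^\times$) is essentially the paper's own argument: the same appeal to Proposition~\ref{prop E^0(Q_p)} and the remark following it, with the same split on whether $nP$ lies in $E^1(\Q_p)$ (your route via $v_p(u_n^2)=v_p(\hat g_n)-v_p(a_n)$ and the paper's via $d_n \mid d^{n^2}f_n$ plus $v_p(g_n)>0$ are the same computation in different clothing). Where you genuinely diverge is the forward implication: the paper simply cites \cite[Proposition~1]{wuthrich2004adic}, whereas you prove it from scratch --- for $P \in E^1(\Q_p)$ via the leading-term congruence $\hat g_n \equiv a^{n^2} \pmod d$ together with $p \nmid a$, and for $P \in E^0(\Q_p)\setminus E^1(\Q_p)$ via the negation of condition (4) of Proposition~\ref{prop E^0(Q_p)} combined with the integrality of $u_n$ (which does follow, as you indicate, from $\gcd(a_n,d_n)=1$ and $u_n^2 = \hat g_n/a_n$). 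Your version buys a self-contained proof at the cost of the preliminary bookkeeping on division polynomials, and it has the mild advantage of not needing Lemma~\ref{lemma 2}; the only implicit hypothesis you should flag is that $nP \neq O$, so that $u_n(P)$ is actually defined (the paper is equally silent on this).
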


\begin{proof}
The forward direction is in \cite[Proposition~1]{wuthrich2004adic}. For the opposite direction, we argue as follows:

Assume $P \not \in E^0(\Q_p)$ and $P=(\frac{a}{d^2},\frac{b}{d^3})$. Then $v_p(d)=0$ and $v_p(u_n)=v_p(f_n)-v_p(d_n)$.
\\ $\bullet$ If $nP \not \in E^1(\Q_p)\Rightarrow v_p(d_n)=0$. Hence by Proposition \ref{prop E^0(Q_p)}, we have $v_p(u_n)>0$.\\
$\bullet $ If $nP \in E^1(\Q_p)$, let $v_p(d_n)=n>0$. Note that $d_n | d^{n^2}f_n$ and hence $v_p(f_n) \geq v_p(d_n)$. From the Remark after Proposition \ref{prop E^0(Q_p)}, we have that $v_p(g_n)>0$. Hence $v_p(f_n)>v_p(d_n)$, which completes the proof.
\end{proof}
Varying through all the primes $p$ gives us Proposition \ref{prop d(P)}
\subsection{Change of Coordinates}

Consider an elliptic curve $E/\Q$ with Weierstrass model $E:y^2+a_1xy+a_3y=x^3+a_2x^2+a_4x+a_6 $. A general change of coordinates is of the form $[u,r,s,t]$ with $x=u^2 x'+r$ and $y=u^3y'+u^2sx'+t$. Since this can always be decomposed into the change of coordinates $[u,0,0,0]$ and $[0,r,s,t]$ and the latter has no effect in the discriminant, it suffices for our purpose to only study the change of coordinates of the form $$(x,y)\to (u^2x,u^3y)$$ and we will denote it as $[u,0,0,0]$. 
\\
For the rest of the section, we fix a prime $p$ and denote $v=v_p$ its normalized valuation.
\\
An immediate corollary is the following:
\begin{corollary}\label{cor change of weier model}
Let $[u,0,0,0]:E\to E'$. Then:
\begin{enumerate}
    \item $\Delta'=u^{12} \Delta$.  In particular if $u=p^r$ then $r=\frac{v(\Delta)'-v(\Delta)}{12}$.
    \item $a_i'=u^ia_i$
    \item $f_2'=u^3f_2$ and $f_3'=u^8f_3$.
    
\end{enumerate}

\end{corollary}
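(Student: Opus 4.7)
The plan is to reduce every assertion to a single weighted-homogeneity principle. The change of variables $x = u^{-2} x'$, $y = u^{-3} y'$ that implements $[u,0,0,0]: E \to E'$ (so that the primed coordinates on $E'$ are $u^2x$ and $u^3 y$) scales each monomial in $a_1, \dots, a_6, x, y$ by a power of $u$ dictated by the weighting $\mathrm{wt}(a_i) = i$, $\mathrm{wt}(x) = 2$, $\mathrm{wt}(y) = 3$. All three parts follow by tracking those weights carefully.

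For part (2), I would substitute $x = u^{-2}x'$ and $y = u^{-3}y'$ into $y^2 + a_1 xy + a_3 y = x^3 + a_2 x^2 + a_4 x + a_6$ and multiply through by $u^6$. The resulting equation
$$y'^2 + (u a_1) x' y' + (u^3 a_3) y' = x'^3 + (u^2 a_2) x'^2 + (u^4 a_4) x' + u^6 a_6$$
is the Weierstrass equation of $E'$, and matching coefficients gives $a_i' = u^i a_i$.

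For part (1), I would use the classical formula $\Delta = -b_2^2 b_8 - 8 b_4^3 - 27 b_6^2 + 9 b_2 b_4 b_6$ with the usual $b_j \in \Z[a_1,\dots,a_6]$, each of weight $j$. Part (2) upgrades to $b_j' = u^j b_j$, whence $\Delta' = u^{12}\Delta$. Taking $v_p$ of both sides when $u = p^r$ yields $r = (v_p(\Delta') - v_p(\Delta))/12$.

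For part (3), recall $f_2 = 2y + a_1 x + a_3$ and $f_3 = 3x^4 + b_2 x^3 + 3 b_4 x^2 + 3 b_6 x + b_8$, which are weighted-homogeneous of weights $3$ and $8$ respectively. Combining $x' = u^2 x$, $y' = u^3 y$, $a_i' = u^i a_i$ and $b_j' = u^j b_j$, a direct substitution produces $f_2' = u^3 f_2$ and $f_3' = u^8 f_3$. There is no real obstacle here; the entire argument is the bookkeeping of weights, and the only care needed is fixing the direction of the isomorphism $[u,0,0,0]$ so that the primed coordinates are $u^2 x$ and $u^3 y$ (and not the reverse), after which the three identities line up term by term.
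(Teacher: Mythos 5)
Your proof is correct and takes essentially the same route as the paper: parts (1) and (2) by direct substitution into the Weierstrass equation, and part (3) via the explicit formulas $f_2 = 2y + a_1x + a_3$ and $f_3 = 3x^4 + b_2x^3 + 3b_4x^2 + 3b_6x + b_8$, with the verification reduced to tracking the weights $\mathrm{wt}(a_i)=i$, $\mathrm{wt}(x)=2$, $\mathrm{wt}(y)=3$. Your explicit fixing of the direction of $[u,0,0,0]$ (so that $x'=u^2x$, $y'=u^3y$) is a worthwhile clarification, since that choice is exactly what makes the signs of the exponents in all three identities come out as stated.
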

\begin{proof}
    (1) and (2) are immediate from the definitions. For (3) one has to use the general formula of the division polynomials, which is less common: 
    $$f_2(x,y)=2y+a_1x+a_3$$
    $$f_3(x,y)=3x^4+b_2x^3+3b_4x^2+3b_6x+b_8$$
    where $b_2=a_1^2+4a_2, b_4=2a_4+a_1a_3, b_6=a_3^2+4a_6, b_8=a_1^2a_6-a_1a_3a_4+a_2a_3^2-4a_2a_6-a_4^2$.\\
    The rest is just computations. 
\end{proof}

\subsection{Proof of Theorem \ref{tamagawa theorem}}

First we start with some observations:\\
Consider the change of coordinates $[p^{r},0,0,0]: E \to E'$. If $P=(x,y) \in E(\Q)$ and $P'=(p^{-2r}x, p^{-3r}y) \in E(\Q)$ its image, then by Corollary \ref{cor change of weier model} we get that 
\begin{equation} \label{equation 4}
    v(f_2')=3r +v(f_2)  
\end{equation}
\begin{equation} \label{equation 5}
  v(f_3') = 8r + v(f_3)  
\end{equation}

where $f_2=f_2(P), f_2'=f_2(P')$.

Also, if $P=\left(\frac{a}{d^2},\frac{b}{d^3}\right) \in E^1(\Q_p)$, then by Lemma \ref{lemma 2} we have $$v(d_m)=v(d)+v(m).$$ 

and since $u_m(P)\in \Z_p^\times$ by Proposition $\ref{prop u_n(P) is unit}$, we get \begin{equation} \label{equation 1}
    v(f_m)=v(d_m)-m^2 v(d)=(1-m^2) v(d) + v(m)
\end{equation}

We end up with: 
\begin{equation} \label{equation 2}
    v(f_2)=-3v(d) + v(2) = \begin{cases}
    -3v(d), & p \neq 2 \\ 
    -3v(d) +1, & p=2
    \end{cases}
\end{equation}

\begin{equation} \label{equation 3}
    v(f_3)=-8v(d) + v(3) = \begin{cases}
    -8v(d), & p \neq 3 \\ 
    -8v(d) +1, & p=3
    \end{cases}
\end{equation}

With these prerequisites out of the way, take $P\in E^r(\Q_p)$ and $P'$ its image in $E'$. In other words, $v(d(P)) \geq r$. Therefore, by (\ref{equation 2}) and (\ref{equation 3}) we have that at least one of the following must be true: 

\begin{equation}\label{equation 6}
    v(f_2(P)) \leq -3r
\end{equation}

\begin{equation} \label{equation 7}
    v(f_3(P)) \leq -8r
\end{equation}

 Write $f_2'=f_2(P')$.By equations (\ref{equation 4}), (\ref{equation 5}), (\ref{equation 6}) and (\ref{equation 7}), we conclude that $v(f_2') \leq 0$ or $v(f_3') \leq 0$ (or both). By Proposition \ref{prop E^0(Q_p)} we conclude that $P' \in E'^0 (\Q_p)$. We proved that if $P\in E^r(\Q_p)$, then its image $P'$ lies in $ E'^0(\Q_p)$.

It is clear that this map is injective, and that its inverse $$[p^{-r},0,0,0]:E'^0(\Q_p) \to E^r(\Q_p)$$ is well defined and injective as well. This completes the proof of Theorem \ref{tamagawa theorem}.
\qed
\vspace{0.2in}

We can now apply this to prove Corollary \ref{cor short weier}:

Let $E_{min}/\Q$ an elliptic curve given in a minimal model and $E/\Q$ the elliptic curve in short Weierstrass form. For any prime other than 2 or 3, $E/\Q$ is also $p-$minimal. For the primes $p=2$ and $p=3$, there's always a change of coordinates $[\frac{1}{p},r,s,t]:E_{min}\to E$ \cite[Chapter~III]{silverman2009arithmetic}. Hence, if $E$ is not minimal at $p$, then $$E^0(\Q_p)\simeq E^1_{min}(\Q_p)$$ and $$[E(\Q_p):E^0(\Q_p)]=[E_{\min}(\Q_p):E^1_{\min}(\Q_p)]=c_pN_p$$
\qed

Note that  by the Weil bound, we have $N_2 \leq 5$ and $N_3 \leq 7$. Therefore, $[E_{\text{short}}(\Q_p):E^0_{\text{short}}(\Q_p)]$ will always divide $7!\cdot c_p$

We summarize the results of this section in the following diagram:

\[\begin{tikzcd}
	{E^n(\mathbb Q_p)} && {E_{min}^{n+r}(\mathbb Q_p)} \\
	{E^0(\mathbb Q_p)} && {E_{min}^r(\mathbb Q_p)} \\
	&& {E_{min}^1(\mathbb Q_p)} \\
	&& {E_{min}^0(\mathbb Q_p)} \\
	{E(\mathbb Q_p)} && {E_{min}(\mathbb Q_p)}
	\arrow["\sim"', tail, from=2-3, to=2-1]
	\arrow["{[p^{r},0,0,0]}"', from=5-3, to=5-1]
	\arrow[from=2-1, to=1-1, no head]
	\arrow["\sim"', tail, from=1-3, to=1-1]
	\arrow[from=5-1, to=2-1, no head]
	\arrow["{c_p}"', from=5-3, to=4-3, no head]
	\arrow["{N_p}"', from=4-3, to=3-3, no head]
	\arrow["{p^{r-1}}"', from=3-3, to=2-3, no head]
	\arrow["{p^n}"', from=2-3, to=1-3, no head]
	\arrow["{c_pN_pp^{n+r-1}}"', shift right=4, curve={height=30pt}, from=5-3, to=1-3, no head]
\end{tikzcd}\]

\bibliography{mybib}

%\printbibliography

\end{document}